\def\rupert#1{\sethlcolor{cyan}\hl{#1}}
\def\epsilon{\varepsilon}
\newtheorem{Theorem}{Theorem}[section]
\newtheorem{Proposition}[Theorem]{Proposition}
\newtheorem{Lemma}[Theorem]{Lemma}
\newtheorem{Corollary}[Theorem]{Corollary}
\newtheorem{Example}[Theorem]{Example}
\newtheorem{Fact}[Theorem]{Fact}
\theoremstyle{remark}
\theoremstyle{definition}
\newtheorem{Definition}[Theorem]{Definition}
\def\srwn{\mathrel{\lneqq\!\raisebox{-0.1em}{\scalebox{0.6}{\!${\substack{sW\\W\phantom{s}}}$}}}}
\def\srbn{\mathrel{\lneqq\!\raisebox{-0.1em}{\scalebox{0.6}{\!${\substack{sB\\B\phantom{s}}}$}}}}
\def\v#1#2{\ensuremath{_{\vec{#1}}^{{\vec{#2}}}}}
\def\v#1#2{\ensuremath{_{\vec{#1}}^{{\vec{#2}}}}}
\def\uh{{\upharpoonright}}
\begin{document}
\title[The computational content of multidimensional discontinuity]{The computational content of\\multidimensional discontinuity\\{\scriptsize (Draft version)}
}

\author[R.\ H\"olzl]{Rupert H\"olzl}
\address{Universität der Bundeswehr München}
\email{r@hoelzl.fr}

\author[K.M.\ Ng]{Keng Meng Ng}
\address{Nanyang Technological University}
\email{kmng@ntu.edu.sg}
\thanks{Ng was supported by the Ministry of Education, Singapore, under its Academic Research
Fund Tier 2 (MOE-T2EP20222-0018).}

\date{\today}

\begin{abstract}	
	The Weihrauch degrees are a tool to gauge the computational difficulty of mathematical problems. Often, what makes these problems hard is their discontinuity. We look at discontinuity in its purest form, that is, at otherwise constant functions that make a single discontinuous step along each dimension of their underlying space. This is an extension of previous work of Kihara, Pauly, Westrick from a single dimension to multiple dimensions. Among other results, we obtain strict hierarchies in the Weihrauch degrees, one of which orders mathematical problems by the richness of the truth-tables determining how discontinuous steps influence the output.
\end{abstract}

\maketitle

\section{Introduction}

	The Weihrauch degrees are a tool to gauge the computational content of mathematical theorems and have been applied successfully to a large number of mathematical problems. Here, a problem always is supplied in the form of an {\em instance} and the task is to find one of its admissible {\em solutions}. Problems that have been successfully analysed using this approach come from analysis, algebra, probabilistic computation, recursion theory, and numerous other mathematical fields; for an overview of the relevant definitions and most important results, we refer the reader to a recent survey by Brattka, Gherardi, and Pauly~\cite{DBLP:journals/corr/BrattkaGP17}.
	
	To classify different levels of difficulty of problems, we require a way to compare their difficulty with each other. This is done using the notion of Weihrauch reducibility. This works by imagining a mathematical problem as a black box to which an instance is provided (in some suitable encoded form). The black box then is imagined to return one of the admissible solutions (again encoded in some form). With this paradigm in mind, we can now ask what such a black box can be used for, in other words: Assuming that we have a black box
	that solves all valid instances of a certain problem $\mathcal{A}$, can  it be repurposed to solve all valid instances of a different problem $\mathcal{B}$?
	
	As valid $\mathcal{A}$-instances may look very different from valid $\mathcal{B}$-instances, we need to translate between both problems and their solutions. To do this, an effective {\em pre-processing} procedure (essentially a Turing functional) is used to translate any valid $\mathcal{B}$-instance into a valid $\mathcal{A}$\nobreakdash-instance. Then we can feed this into the black box solving $\mathcal{A}$. Another effective procedure, the {\em post-processing}, then translates this solution for the   $\mathcal{A}$-instance into a solution to the original $\mathcal{B}$-instance.
	Note the uniformity, that is, that the functionals must not depend on the individual instances.	

	The reducibility induces a rich lattice of computational problems of different difficulties. One of the biggest achievements in this area is the identification of the class of so-called choice problems. They encapsulate in some sense the cores of difficulty hidden in many mathematical problems. This way they can be said to form a scaffolding underlying and structuring the entire lattice. For an overview we again refer the reader to the survey by Brattka, Gherardi, and Pauly~\cite{DBLP:journals/corr/BrattkaGP17}.
	
	In this article we investigate the hurdles to algorithmically solving mathematical problems using a different approach. Typically, what makes these problems interesting and hard is their discontinuity.	Here, we look  systematically at discontinuity in its purest isolated form, that is, at otherwise constant functions that make a single discontinuous step along each dimension of their underlying space. We will mostly focus on the multidimensional case, extending previous unpublished work of Kihara, Pauly, and Westrick~\cite{LinTakArn}, presented by Westrick~\cite[Section 5.10]{brattka_et_al:DagRep.8.9.1}, that focussed on a single dimension.
	
	We will work in Cantor space $2^\omega$ and use the lexicographic ordering~$<_{lex}$. Following Kihara, Pauly, and Westrick~\cite{LinTakArn}, who in turn were extending work of Day, Downey, and Westrick~\cite{day2022three}, for $\alpha\in 2^\omega$ we define $s_\alpha:2^\omega\rightarrow \{0,1\}$ by 
	\[s_\alpha(x)=\begin{cases}0,&\text{if $x<_{lex}\alpha$},\\1,&\text{if $x\geq_{lex}\alpha$.}\end{cases}\]
	It is worth mentioning that we could have alternatively worked on the unit interval of the real numbers instead of Cantor space, and in fact most of the results in this paper hold for both collections of Weihrauch degrees 
	\[\mathcal{S}_{2^\omega}=\{ \deg_W(s_\alpha)\mid\alpha\in 2^\omega\} \quad \text{and}\quad \mathcal{S}_{[0,1]}=\{ \deg_W(s_x)\mid x\in [0,1]\}.\]
	However, it can be shown that $\mathcal{S}_{2^\omega}\cap \mathcal{S}_{[0,1]}=\{\deg_W(s_0)\}$ and thus the two sets of Weihrauch degrees are virtually disjoint.
	
	Focussing again on Cantor space, we can give a first interesting example of a non-trivial degree describable by a discontinuous step function.
	\begin{Example}
		By using a bit-flipping pre-processing function, it is easy to see that $s_{1^\omega} \equiv_{sW} \mathrm{LPO}$.
	\end{Example}
	
	\medskip
	
	The remainder of the article is structured as follows: In Section~\ref{ndasbjhawehjvxsdabhcvxd} we make an interesting new observation for the one-dimensional case, namely that there exist $1$-equivalent c.e.\ sets $A$ and $B$ such that
	$s_\alpha \mid_W s_\beta$.
	
	In Section~\ref{stepintwodim}, we begin by considering ways of generalizing the one-dimen\-sional notion~$s_\alpha$. While the possibly most obvious idea for doing so might be to allow \textit{multiple} discontinuous steps, we give an easy argument why this is of little independent interest; and this provides the motivation for studying the multidimensional case instead where after evaluating each discontinuity a truth table is applied to the resulting binary vector to obtain the final binary output. 
	Initially, we allow the thresholds for the discontinuous jumps to be different along each dimension and show that this makes the resulting mathematical problems incomparable in the Weihrauch degrees, except for cases where they are comparable trivial reasons. 
	
	In Section~\ref{jqewhfjksadjkvf}, we study how the special case where the truth table is the parity function compares to the Weihrauch degree of products of multiple copies of the one-dimensional case; this leads to the identification of the first hierarchy of degrees.
	
	In Section~\ref{hhdashdfhbjshjehjchjasd}, we then fix all discontinuity threshold to the same value (or, equivalently, to a vector of pairwise Weihrauch equivalent values). We identify a criterion for quantifying the richness of different truth tables, and show that this exactly captures the strength of the defined mathematical problem in the Weihrauch degrees.
		
	\section{Contrasting with the $1$-degrees}\label{ndasbjhawehjvxsdabhcvxd}
	
	Before moving on to the multidimensional case, we will make a new observation concerning the one-dimensional case.
	\begin{Definition}
		We call $\alpha\in 2^\omega$ proper if it contains infinitely many $1$'s.
	\end{Definition}
	In the following we write $\alpha \sqsubseteq \beta$ if $\alpha$ is a prefix of $\beta$; $\alpha \sqsubset \beta$ if it is a proper prefix; and use ``$*$'' to denote the concatenation of binary strings. Note that $s_\alpha((\alpha\uh n)*1^\omega)=1$ holds for arbitrary $\alpha$ and $n$; and for proper $\alpha$ we also have ${s_\alpha((\alpha\uh n)*0^\omega)=0}$. We will use the following fact that was already observed by Kihara, Pauly, and Westrick~\cite{LinTakArn}.
	\begin{Fact}\label{fact:boundary}
Fix some proper~$\alpha$ and suppose that $s_\alpha\leq_W s_\beta$ via pre-processing function~$\Phi$ and post-processing function~$\Psi$. Then $\Phi(\alpha)=\beta$. In particular, if we even have $s_\alpha\leq_{sW} s_\beta$, then $\Psi$~is the identity.
\end{Fact}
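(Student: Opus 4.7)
The plan is to prove both assertions by studying how $\alpha$ is approached from both sides. Write $u_n := (\alpha\uh n)*0^\omega$ and $v_n := (\alpha\uh n)*1^\omega$. Both sequences converge in $2^\omega$ to $\alpha$, and by the remark stated right before the fact, properness of $\alpha$ gives $s_\alpha(u_n)=0$ and $s_\alpha(v_n)=1$ for every $n$.

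For the main claim $\Phi(\alpha)=\beta$, I would argue by contradiction. First I would observe that $s_\beta$ is locally constant on $2^\omega\setminus\{\beta\}$: if $z\neq\beta$ and they first differ at position $k$, then every $z'$ agreeing with $z$ on the first $k+1$ bits lies on the same side of $\beta$ in the lexicographic order. Assuming $\Phi(\alpha)\neq\beta$, this yields an open neighborhood $U$ of $\Phi(\alpha)$ and a bit $c:=s_\beta(\Phi(\alpha))$ with $s_\beta|_U\equiv c$. Continuity of the Turing functional $\Phi$ then places $\Phi(u_n),\Phi(v_n)$ inside $U$ for all sufficiently large $n$, so $s_\beta(\Phi(u_n))=s_\beta(\Phi(v_n))=c$. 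The reduction equation $\Psi(x,s_\beta(\Phi(x)))=s_\alpha(x)$, applied at $x=u_n$, $v_n$, and $\alpha$, forces
\[\Psi(u_n,c)=0,\quad \Psi(v_n,c)=1,\quad \Psi(\alpha,c)=1.\]
By continuity of $\Psi$ and $u_n\to\alpha$, the value $\Psi(u_n,c)$ must stabilize at $\Psi(\alpha,c)=1$, contradicting $\Psi(u_n,c)=0$. In the strong setting $\Psi$ sees only the solution bit, and the contradiction collapses immediately to $\Psi(c)=0\neq 1=\Psi(c)$ for any large enough $n$.

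Once $\Phi(\alpha)=\beta$ is in hand, the second assertion is a short computation. In the strong setting $\Psi\colon\{0,1\}\to\{0,1\}$, and $\Psi(1)=\Psi(s_\beta(\beta))=\Psi(s_\beta(\Phi(\alpha)))=s_\alpha(\alpha)=1$. Because $\alpha$ is proper, $\alpha>_{lex}0^\omega$, whence $s_\alpha(0^\omega)=0$ and $\Psi(s_\beta(\Phi(0^\omega)))=0$; combined with $\Psi(1)=1$, this forces $s_\beta(\Phi(0^\omega))=0$ and therefore $\Psi(0)=0$, making $\Psi$ the identity. The only delicate step anywhere in the argument is the local-constancy observation for $s_\beta$ and lifting it through $\Phi$; once that is in place, the continuity of $\Psi$ delivers the contradiction essentially for free.
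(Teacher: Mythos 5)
Your proof is correct and follows essentially the same strategy as the paper's: assume $\Phi(\alpha)\neq\beta$, observe that $s_\beta\circ\Phi$ is then locally constant near $\alpha$, and derive a contradiction by approaching $\alpha$ from the left with a proper extension, using the continuity of $\Psi$. The paper works with a single explicit $\alpha'$ and a use bound $M$ where you use the sequence $u_n$ and a limit argument, but these are the same idea; a small plus of your write-up is that you also spell out the ``in particular'' clause about $\Psi$ being the identity in the strong reduction case, which the paper leaves to the reader (and note that, as the paper's preceding remark indicates, properness is only needed for $s_\alpha(u_n)=0$, while $s_\alpha(v_n)=1$ holds unconditionally).
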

\begin{proof}
Suppose that there exists a least $m$ such that $\Phi(\alpha)(m)\neq\beta(m)$. 
Then, since $s_\alpha(\alpha)=1$, there must be an $M>m$ and a $j\in\{0,1\}$ such that $\Phi(\alpha\uh M)(m){\downarrow}=j$ and $\Psi(\alpha\uh M,j){\downarrow}=1$. 
As $\alpha$ is proper, there is some $\alpha'$ to the left of $\alpha$ which agrees with $\alpha$ up to length $M$. Then 
\[\Phi(\alpha')\uh (m+1)=\Phi(\alpha)\uh (m+1)=\left(\beta\uh m \right)* j\]
and $\Psi(\alpha'\uh M,j){\downarrow}=1$, which is a contradiction since $s_\alpha(\alpha')=0$.
\end{proof}
\begin{Proposition}
	There exist c.e.\ sets $A$ and $B$ such that
	\[A\equiv_1 B \quad \text{and} \quad s_\alpha \mid_W s_\beta,\]
	where $\alpha=\chi_A$ and $\beta=\chi_B$.
\end{Proposition}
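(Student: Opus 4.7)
The plan is a finite-injury priority construction building c.e.\ sets $A$ and $B$ alongside a computable bijection $\pi\colon\omega\to\omega$ satisfying $\pi(A)=B$. By Myhill's isomorphism theorem, $\pi(A)=B$ alone yields $A\equiv_1 B$, so the remaining task is to diagonalise against all candidate Weihrauch reductions. For each index $e$ we impose requirements $R_e^{AB}$ and $R_e^{BA}$ stipulating that the pair $(\Phi_e,\Psi_e)$ of the $e$-th Turing functionals witnesses neither $s_\alpha\leq_W s_\beta$ nor $s_\beta\leq_W s_\alpha$, together with positive requirements that $A$ and $B$ both be infinite so that $\alpha:=\chi_A$ and $\beta:=\chi_B$ are proper and Fact~\ref{fact:boundary} applies.

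To satisfy $R_e^{AB}$, I refine the proof of Fact~\ref{fact:boundary}. Writing $\Psi_e^b(x):=\Psi_e(x,b)$, a successful reduction forces $\Phi_e(\alpha)=\beta$ and $\Psi_e^1(\alpha)=1$; by continuity of $\Psi_e^1$ there is $M$ with $\Psi_e^1(x)=1$ for all $x$ in the cylinder $[\alpha\uh M]$, and for any such $x$ with $x<_{lex}\alpha$ (which exists because $\alpha$ is proper) the reduction further demands $\Phi_e(x)<_{lex}\beta$. The diagonalisation searches, at each stage~$s$, for a test point $y=\alpha_s\uh n * 0^\omega$ such that the current approximation of $\Phi_e(y)$ is strictly lex-above the current approximation of $\beta$ on some initial segment of length $\ell$. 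When such $y$ is found, the strategy freezes $A\cap[0,n)$ and $A\cap\pi^{-1}([0,\ell))$ permanently, then enumerates one fresh large element into $A$ outside both restraints, which forces $y<_{lex}\alpha$ in the limit. The resulting configuration has $s_\alpha(y)=0$ but $s_\beta(\Phi_e(y))=1$; once $n\geq M$ this provides a point at which the reduction fails. The strategy for $R_e^{BA}$ is symmetric.

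The construction is organised by standard finite-injury bookkeeping: requirements are ordered by priority, each assigns its follower as a number larger than every previously restrained position, and $\pi$ is extended in back-and-forth fashion so that $\pi_s(A_s)=B_s$ is maintained at every stage~$s$ and $\pi$ becomes a total bijection of $\omega$ in the limit. I expect the main obstacle to be the coordination of the freezing (on $\beta$) with the ongoing enumeration into $A$ required by infinity and lower-priority requirements: since freezing $\beta$ on $[0,\ell)$ translates through $\pi$ to a restraint on the finite set $\pi^{-1}([0,\ell))$, cofinitely many positions of $A$ remain enumerable and fresh followers beyond all active restraints always exist. The subtler point that the continuity modulus $M$ of $\Psi_e^1$ is unknown in advance is handled by permitting the strategy to re-attempt with larger $n$ whenever earlier attempts failed to produce a suitable witness; once all higher-priority restraints have stabilised, some attempt will satisfy $n\geq M$ and succeed.
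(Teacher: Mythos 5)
Your outline correctly isolates the consequences of Fact~\ref{fact:boundary}: a valid reduction $(\Phi_e,\Psi_e)$ forces $\Phi_e(\alpha)=\beta$ and, by continuity of $\Psi_e(\cdot,1)$, there is a modulus $M$ so that $\Psi_e(y,1)=1$ for all $y\in[\alpha\uh M]$; for such $y<_{lex}\alpha$ the reduction must then have $\Phi_e(y)<_{lex}\beta$. The gap is in how you try to exploit this. Your strategy \emph{searches} for a test point $y=\alpha_s\uh n*0^\omega$ at which $\Phi_e(y)$ already looks strictly lex-above the current $\beta_s$, and acts only when the search succeeds. But if $(\Phi_e,\Psi_e)$ really were a valid reduction for the $\alpha,\beta$ being built, then --- precisely by the observation you quote --- no such $y$ with $n\ge M$ can exist: the adversary is threshold-preserving and will send everything below $\alpha$ to below $\beta$. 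So the search never fires, your requirement never acts, and you have produced no argument that the reduction is in fact defeated. The remark that ``once $n\ge M$ some attempt will succeed'' is exactly backwards: $n\ge M$ is when success becomes \emph{impossible} against a real threat. A one-sided wait-for-a-witness strategy cannot diagonalise here; the requirement must always get an opportunity to act and must have a move in \emph{both} of the cases ``$\Phi_e$ follows $\beta$'' and ``$\Phi_e$ does not follow $\beta$.''

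This is where the paper's specific coupling between $A$ and $B$ is doing essential work that a generic back-and-forth Myhill bijection $\pi$ does not provide. The paper takes $B(2n)=A(2n+1)$, $B(2n+1)=A(2n)$, so placing $m_k+1$ versus $m_k$ into $A$ toggles $\alpha$ between $\cdots 01\cdots$ and $\cdots 10\cdots$ at positions $m_k,m_k+1$ while simultaneously toggling $\beta$ in the \emph{opposite} direction at the same positions. The strategy then probes $\Phi_e$ on the fixed test point $(\alpha_s\uh m_k)*01*1^s$: if the output fails to extend $(\beta_s\uh m_k)*10$, it makes $\alpha$ agree with the test point and $\beta$ extend $(\beta_s\uh m_k)*10$, so $\Phi_e(\alpha)\ne\beta$; if the output does extend it and $\Psi_e$ answers $1$, it pushes $\alpha$ above the test point and $\beta$ below the output, so $\Psi_e$ answers incorrectly. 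Either way the requirement acts, and the case where it never acts is shown impossible by Fact~\ref{fact:boundary} plus the convergence of $\Psi_e$ on the test point. Your proposal has no analogue of the anti-correlated $01\leftrightarrow 10$ move, and therefore no move at all when $\Phi_e$ looks like it is tracking $\beta$ correctly. To repair the argument you would need both (a) a dichotomy on the observed behaviour of $(\Phi_e,\Psi_e)$ at a controlled test point with an explicit action in each branch, and (b) a bijection $\pi$ chosen so that a single enumeration into $A$ produces the needed opposite movement of $\alpha$ and $\beta$; the free-form back-and-forth $\pi$ does not give you (b).
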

\begin{proof}
We will build a c.e.~set $A$ and let $B(2n)=A(2n+1)$ as well as ${B(2n+1)=A(2n)}$ for all $n \in \omega$. Then clearly~$A\equiv_1 B$. To make both $\alpha$ and $\beta$ proper, we set $A(4n+2)=A(4n+3)=1$ for every~$n \in \omega$. All other values of $A$ and $B$ will be determined during the construction; we will need to satisfy the requirements
\begin{align*}
 R_{2e}{:}\;\;&  (\Phi_e,\Psi_e)\text{ does not witness $s_\alpha\leq_W s_\beta$,}\\
R_{2e+1}{:}\;\;& (\Phi_e,\Psi_e)\text{ does not witness $s_\beta\leq_W s_\alpha$.}
\end{align*}
Here $(\Phi_e,\Psi_e)$ is the $e$-th pair of Turing functionals, where $\Phi_e$ is a potential pre-processing function and $\Psi_e$ is a potential post-processing function. For each stage $s$, we let $\alpha_s$ and $\beta_s$ denote the current approximations of $\alpha$ and~$\beta$, respectively. The proof is a finite injury argument; the priorities of the requirements are arranged in the order $R_0<R_1<R_2<\dots$ 

\smallskip

At stage $s=0$ we let $A_s=\emptyset$ and let $m_{e,s}=4e$ for each $e$.

\smallskip
 
Now assume that $s>0$. For an even number $2e<s$, we say that $R_{2e}$ requires attention at stage $s$ if $m_{2e}$ and $m_{2e}+1$ are both not in $A$, and the following two conditions hold:
\begin{enumerate}[(i)]
\item $\Phi_e\left(\left(\alpha_s\uh m_{2e}\right) * 01*1^s\right)$ has produced at least~$m_{2e}+2$ bits of output.
\item Either 
\[\Phi_e\left(\left(\alpha_s\uh m_{2e}\right) * 01*1^s\right)\not\sqsupseteq\left(\beta_s\uh m_{2e}\right)*10,\]
or 
\[\Psi_e\left(\left(\alpha_s\uh m_{2e}\right) * 01*1^s,1\right){\downarrow}=1.\]
\end{enumerate}
An analogous definition holds for $R_{2e+1}$ with $\alpha_s$ and $\beta_s$ interchanged.


At stage $s>0$, if no $R_k$ with $k<s$ requires attention, do nothing. Otherwise fix the least such $k$ and assume it is of the form $k=2e$.
\begin{itemize}
\item If $\Phi_e\left(\left(\alpha_s\uh m_k\right) * 01*1^s\right)\not\sqsupseteq\left(\beta_s\uh m_k\right)*10$, then enumerate $m_k+1$ into~$A$ (and thereby cause $m_k\in B$);

\item if $\Phi_e\left(\left(\alpha_s\uh m_k\right) * 01*1^s\right)\sqsupseteq\left(\beta_s\uh m_k\right)*10$, then enumerate $m_k$ into~$A$ (and thereby cause $m_k+1\in B$). 
\end{itemize}
In either case, in addition, enumerate all $x$'s such that $m_k+1<x<s$ into $A$ (and thereby also into $B$). Then initialize all $m_j$'s with~$j>k$ by setting them to fresh, strictly increasing values that are multiples of $4$.

This completes the construction; in case that $k$ is of the form $k=2e+1$, we do the construction with $A$ and $B$ interchanged. 

\medskip

We verify that the construction works as required. For each~${k\in\omega}$, if $m_k$ is not initialized, then $R_k$ can be attended to at most one more time; therefore the value of $m_k$ must eventually reach a limit at some stage~$s_k$. We check that $R_k$~is eventually satisfied; w.l.o.g.\ assume that $k=2e$ and that $(\Phi_e,\Psi_e)$ witnesses~${s_\alpha\leq_W s_\beta}$. 

First, we claim that $R_k$ will receive attention at some stage~$t>s_k$. Assume otherwise and let  $s\geq s_k$ be large enough so that ${\alpha_s\uh m_k=\alpha\uh m_k}$; then by construction ${\beta_s\uh m_k=\beta\uh m_k}$ holds as well. As $R_k$~has not received attention after stage~$s_k$ we have 
\[
	{\alpha\uh (m_k+2)=\left(\alpha_s\uh m_k\right)*00} \quad \text{and} \quad {\beta\uh (m_k+2)=\left(\beta_s\uh m_k\right)*00}.
\]
By Fact \ref{fact:boundary}, we have that $\Phi_e(\alpha)=\beta$, and thus (i)~must eventually hold. Then we must have 
\[{\Phi_e\left((\alpha_s\uh m_k)*01*1^\omega\right)\sqsupset (\beta_s\uh m_k)*10},\]
as otherwise~(ii) would hold as well. Consequently, 
\[s_\beta\left(\Phi_e\left((\alpha_s\uh m_k)*01*1^\omega\right)\right)=1.\]
We also have ${s_\alpha\left((\alpha_s\uh m_k)*01*1^\infty\right)=1}$. Thus it follows that \[\Psi_e\left(\left(\alpha_s\uh m_{k}\right) * 01*1^s,1\right)=1,\] which means that (ii)~must hold after all, a contradiction.

Thus, $R_k$ will receive attention at some stage~$t>s_k$; there are two cases to consider. First, if 
\[\Phi_e\left((\alpha_t\uh m_k)*01*1^t\right)\not\sqsupseteq (\beta_t\uh m_k)*10,\] then, since by choice of~$s_k$ we have that $\alpha_t$ and $\beta_t$ will not change anymore on their first $m_k$ bits, it holds that
\[\alpha\sqsupset (\alpha_t\uh m_k)*01*1^t \quad \text{and} \quad \beta\sqsupset (\beta_t\uh m_k)*10.\]
Therefore $\Phi_e(\alpha)\neq\beta$, a contradiction. 

In the second case, we have
\[\Phi_e\left((\alpha_t\uh m_k)*01*1^t\right)\sqsupset (\beta_t\uh m_k)*10\; \text{and} \; \Psi_e\left((\alpha_t\uh m_k)*01*1^t,1\right)=1.\] Then the construction ensures that
\[\alpha\sqsupset (\alpha_t\uh m_k)*10*1^t \quad \text{and} \quad \beta\sqsupset (\beta_t\uh m_k)*01.\]
Therefore,
\[{s_\alpha\left((\alpha_t\uh m_k)*01*1^\omega\right)=0} \quad \text{and}\quad {s_\beta\left(\Phi_e\left((\alpha_t\uh m_k)*01*1^\omega\right)\right)=1}.\] 
But this would mean that the post-processing $\Psi_e$ does not act correctly on input $(\alpha_t\uh m_k)*01*1^\omega$, again a contradiction.
\end{proof}

\section{Multidimensional step functions with different thresholds}\label{stepintwodim}

A natural generalization of the step functions studied above seems to be the study of such functions with multiple points of discontinuity. However, it turns out that this is not a very interesting notion by the following proposition.
\begin{Definition}
	Let $(\alpha_1,\dots,\alpha_n) \in (2^\omega)^n$. Write $s_{(\alpha_1,\dots,\alpha_n)}$ for the function
	\[
	x \mapsto\begin{cases}
		0 & \text{if } x < \alpha_1,\\
		1 & \text{if } \alpha_1 \leq x < \alpha_2,\\
		\;\!\vdots&\;\;\;\;\;\;\;\;\;\;\;\;\;\!\vdots\\
		n-1 & \text{if } \alpha_{n-1} \leq x < \alpha_n,\\
		n & \text{if }  \alpha_n \leq x.
	\end{cases}
	\]
\end{Definition}
\begin{Proposition}
	For all $n\in \omega$ and $(\alpha_1,\dots,\alpha_n) \in (2^\omega)^n$ we have
	\[s_{(\alpha_1,\dots,\alpha_n)} \leq_{sW} \bigsqcup_1^n  s_{\alpha_i} \quad \text{and} \quad\, \bigsqcup_1^n  s_{\alpha_i} \leq_{W} s_{(\alpha_1,\dots,\alpha_n)}.\]
\end{Proposition}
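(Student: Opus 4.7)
My plan is to prove the two reductions separately. The direction $\bigsqcup_1^n s_{\alpha_i} \leq_W s_{(\alpha_1,\dots,\alpha_n)}$ is the easier one: given an instance $(i,x)$ of the coproduct, I would pre-process to $x$ and query the oracle for $s_{(\alpha_1,\dots,\alpha_n)}$, obtaining $k \in \{0,1,\dots,n\}$. Since $s_{\alpha_i}(x) = 1$ iff $x \geq_{lex} \alpha_i$ iff $k \geq i$, the post-processor, having access to the original input, returns $1$ if $k \geq i$ and $0$ otherwise. This is genuinely a Weihrauch (rather than strong Weihrauch) reduction, since the post-processor needs the index~$i$ recovered from the input.

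For the direction $s_{(\alpha_1,\dots,\alpha_n)} \leq_{sW} \bigsqcup_1^n s_{\alpha_i}$, my plan is to construct, for the specific tuple $(\alpha_1,\dots,\alpha_n)$, a clopen partition $A_1,\dots,A_n$ of $2^\omega$ with the property that $x \in A_i$ implies $\alpha_{i-1} \leq_{lex} x <_{lex} \alpha_{i+1}$, adopting the conventions $\alpha_0 = 0^\omega$ and $\alpha_{n+1}$ exceeding all of $2^\omega$. Given such a partition, the pre-processor sends $x$ to $(i,x)$ where $i$ is the unique index with $x \in A_i$ (decidable because $A_i$ is clopen), and the post-processor outputs $i-1+b$ upon reading the tagged oracle answer $(i,b)$: if $b=1$ then $\alpha_i \leq_{lex} x <_{lex} \alpha_{i+1}$, giving value $i$; if $b=0$ then $\alpha_{i-1} \leq_{lex} x <_{lex} \alpha_i$, giving value $i-1$. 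Since this post-processor depends only on the oracle's tagged output and not on $x$ itself, the reduction is indeed strong Weihrauch.

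The main technical obstacle is constructing the clopen partition. My approach is iterative: for each $i \in \{1,\dots,n-1\}$ I would build a clopen set $C_i$ with $\{x : x \geq_{lex} \alpha_{i+1}\} \subseteq C_i \subseteq \{x : x \geq_{lex} \alpha_i\}$, then set $C_0 = 2^\omega$, $C_n = \emptyset$, and $A_i = C_{i-1} \setminus C_i$. The existence of each $C_i$ will use compactness of the closed set $\{x : x \geq_{lex} \alpha_{i+1}\}$ together with the strict inequality $\alpha_i <_{lex} \alpha_{i+1}$: every such $x$ strictly exceeds $\alpha_i$, hence admits a finite prefix $\sigma$ with $\sigma * 0^\omega >_{lex} \alpha_i$, so that the basic cylinder $[\sigma]$ contains $x$ and lies inside $\{y : y \geq_{lex} \alpha_i\}$; a finite subcover then yields the desired $C_i$. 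Tuples with duplicate coordinates reduce to the strictly-ordered case, since repeated $\alpha_i$'s merely cause $s_{(\alpha_1,\dots,\alpha_n)}$ to skip some output values.
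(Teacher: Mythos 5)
Your proof is correct and takes essentially the same approach as the paper's: for the first direction both reduce to covering Cantor space by clopen pieces, one straddling each jump of $s_{(\alpha_1,\dots,\alpha_n)}$ (you build an explicit clopen partition $A_1,\dots,A_n$ via compactness; the paper covers by overlapping open intervals with finite-expansion endpoints, letting compactness implicitly furnish the modulus), then querying the $s_{\alpha_i}$ indicated by the piece and post-processing by $b\mapsto i-1+b$, and the second direction is the same trivial reduction. Your construction of the nested clopen sets $C_i$ and the explicit post-processor merely fill in details the paper's proof sketch leaves unstated.
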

\begin{proof}[Proof sketch]
	For the first statement, enclose each $\alpha_i$ by an open interval with endpoints that have finite binary expansion in such a way that each interval contains exactly one $\alpha_i$ and overlaps with its two neighbouring intervals on the left and on the right. The pre-processing then reads enough of the input to locate it inside one of these intervals, and then requests an answer from the corresponding $s_{\alpha_i}$.
	
	For the second statement, we are given as input a pair of a natural number~$k$ and an $x\in2^\omega$. The pre-processing is the identity function, and the post-processing outputs $1$ iff the return value from the black box is $k$ or larger.
\end{proof}
Another possible route for generalization is to go to multiple dimensions with a discontinuity threshold for each dimension. Then for a given point in multidimensional space we get multiple bits of output that describe in each dimension on which side of the respective threshold the point lies. We can then process this sequence of bits in different ways, namely through different truth-tables to produce a final output bit.
In this section, we will show that if we make no further assumptions about the thresholds besides properness, then in general a rather chaotic picture results in the Weihrauch degrees. This will then justify making stronger assumptions in the next section to make a more interesting picture emerge.
\begin{Definition}
	Fix an integer $n>1$ and proper $\alpha_1, \dots, \alpha_n \in 2^\omega$. Let $F$~be any $n$-dimensional truth-table, that is, any total function from $\{0,1\}^n$ to~$\{0,1\}$. We write $n(F)$ for $n$. Define the function $s^F_{\alpha_1,\dots,\alpha_n}\!\colon (2^\omega)^n \rightarrow 2$ via
	\[(x_1,\dots,x_n) \mapsto F\left(s_{\alpha_1}(x_1),\dots,s_{\alpha_n}(x_n)\right).\]
\end{Definition}
\begin{Definition}
	For $1 \leq i \leq n$ and $b\in\{0,1\}$, define $p_i^b\colon 2^n \rightarrow 2^n$ via
\[p_i^b(x_1,\dots,x_n)=(x_1,\dots,x_{i-1},b,x_{i+1},\dots x_n).
\]In other words, $p_i^b(x_1,\dots,x_n)$ outputs the vector with all entries equal to the input except for the $i$-th entry which is replaced with $b$.
	
	\medskip
	
	\noindent For an $n$-dimensional truth-table $F$, and $\vec{i}=(i_1,\dots,i_k) \in \{1,\dots,n\}^k$ such that the $i_j$'s are pairwise different, and $\vec{b}=(b_1,\dots,b_k) \in \{0,1\}^k$, we write $F\v{i}{b}$ for
	$F \circ p_{i_1}^{b_1} \circ \dots \circ p_{i_k}^{b_k}$.
\end{Definition}
It is natural to ask for which $n$\nobreakdash-dimensional truth tables $F$ and $G$ and which sequence of threshold values ${\alpha_1,\dots,\alpha_n\in 2^\omega}$ we have
\[s^G_{\alpha_1,\dots,\alpha_n}\leq_{sW} s^F_{\alpha_1,\dots,\alpha_n} \quad \text{or at least} \quad s^G_{\alpha_1,\dots,\alpha_n}\leq_W s^F_{\alpha_1,\dots,\alpha_n}.\]
If $G=F$ or $G=1-F$ then this is trivially the case; in fact, if
\[G=F\v{i}{b} \quad \text{or} \quad G=1-F\v{i}{b}\]
for some $\vec{i}$ and $\vec{b}$ then $s^G_{\alpha_1,\dots,\alpha_n}\leq_{sW} s^F_{\alpha_1,\dots,\alpha_n}$ is witnessed by a straight-forward choice for the pre-processing function; we give the details below. The following theorem states that it is possible to find threshold values $\alpha_1,\dots,\alpha_n\in 2^\omega$ such that the Weihrauch degrees corresponding to truth tables~$F$ and $G$ can only be comparable to each other for these trivial reasons.
\begin{Theorem}\label{thm:differentalpha}
	There is an infinite sequence $\vec{\beta} =(\beta_1,\beta_2,\dots)\in (2^\omega)^\omega$ of proper sequences  such that for all $n>0$ and all $n$-dimensional truth-tables~$F$ and $G$, the following are equivalent:
	\begin{enumerate}
		\item $s_{\vec{\beta}\uh n}^G \leq_{sW} s_{\vec{\beta}\uh n}^F$.
		\item $s_{\vec{\beta}\uh n}^G \leq_{W} s_{\vec{\beta}\uh n}^F$.
		\item There exist $\vec{i}=(i_1,\dots,i_k) \in \{1,\dots,n\}^k$ such that the $i_j$ are pairwise different and $\vec{b}=(b_1,\dots,b_k) \in \{0,1\}^k$, with $G =F\v{i}{b}$ or $G=1- F\v{i}{b}$.
	\end{enumerate}
Furthermore we can choose $\vec{\beta} $ so that $\beta_j\equiv_1\beta_k$ for every $j,k$.
\end{Theorem}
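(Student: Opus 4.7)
I would prove Theorem~\ref{thm:differentalpha} by establishing the three implications $(3)\Rightarrow(1)\Rightarrow(2)\Rightarrow(3)$. The direction $(3)\Rightarrow(1)$ is sketched immediately before the theorem statement: suppose $G=F\v{i}{b}$ (the negation case differing only by a post-processing bit-flip). Define the pre-processing to replace the $i_j$-th coordinate of its input by $0^\omega$ (if $b_j=0$) or $1^\omega$ (if $b_j=1$), leaving all other coordinates untouched. Each $\alpha_{i_j}$ is proper, so $0^\omega<_{lex}\alpha_{i_j}\le_{lex}1^\omega$; hence $s_{\alpha_{i_j}}(\Phi(\vec x)_{i_j})=b_j$ and $s_{\alpha_\ell}(\Phi(\vec x)_\ell)=s_{\alpha_\ell}(x_\ell)$ for $\ell\notin\{i_1,\dots,i_k\}$, so the black box returns $F\v{i}{b}(s_{\vec\alpha}(\vec x))=G(s_{\vec\alpha}(\vec x))$ as required. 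Meanwhile $(1)\Rightarrow(2)$ is immediate.

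The main content is $(2)\Rightarrow(3)$, which I would establish by a finite-injury priority construction of $\vec\beta$. The requirements are
\[R_{n,F,G,e}\colon\ \text{if $(3)$ fails for $(F,G)$, then $(\Phi_e,\Psi_e)$ does not witness $s_{\vec\beta\uh n}^G\le_W s_{\vec\beta\uh n}^F$.}\]
The $1$-equivalence $\beta_j\equiv_1\beta_k$ is secured by encoding all rows $\beta_k$ into a single c.e.\ set via a fixed computable pairing, so each row may still be extended freely in isolation. To attack one requirement~$R$, in the spirit of the earlier proposition of Section~\ref{ndasbjhawehjvxsdabhcvxd}, I would fix fresh markers $m_1,\dots,m_n$; probe with an input $\vec x$ whose $i$-th coordinate has the form $(\beta_{i,s}\uh m_i)*\sigma_i*1^s$, with short $\sigma_i$'s chosen so that placing the next $1$ of $\beta_i$ at one of two successive positions toggles $s_{\beta_i}(x_i)$; and wait for $(\Phi_e,\Psi_e)$ to commit an answer $b$ on~$\vec x$. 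We then extend $\vec\beta$ to realize a vector $\vec c\in\{0,1\}^n$ with $G(\vec c)\ne b$.

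The technical heart, and what I expect to be the main obstacle, is showing that such a~$\vec c$ always exists once $(3)$ fails --- a multidimensional analog of Fact~\ref{fact:boundary}. After $(\Phi_e,\Psi_e)$ has committed on~$\vec x$, the joint set of realizable pairs $\bigl(s_{\vec\beta}(\vec x),s_{\vec\beta}(\Phi_e(\vec x))\bigr)$ should turn out to have a product structure: for each coordinate~$i$, either $\Phi_e(\vec x)_i$ lies on one side of~$\beta_i$ independent of the extension (a frozen bit contributing a pair $(i_j,b_j)$), or it shadows $x_i$ on the $\beta_i$-side. Combined with the at most two possible post-processing reactions, this should force the effective truth-table realised by the reduction to be of the form $\vec c\mapsto\psi(F\v{i}{b}(\vec c))$ for some $\vec i,\vec b$ and $\psi\in\{\mathrm{id},\,1-\mathrm{id}\}$; condition~$(3)$ is then exactly the criterion for $G$ to equal such a function, so its failure supplies the diagonal witness. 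Ruling out subtler behaviors of~$\Phi_e$ --- for instance, coordinates of $\Phi_e(\vec x)$ converging to $\beta_i$ from opposite sides in an input-dependent manner --- is the delicate part, and is handled by a careful choice of probe inputs. Standard finite-injury bookkeeping then combines the individual strategies into a single construction.
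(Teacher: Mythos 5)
Your high-level plan for $(2)\Rightarrow(3)$ — diagonalize against quadruples $(G,F,\Phi,\Psi)$ for which $(3)$ fails, probe with inputs whose $i$-th coordinate can still be toggled, and classify each coordinate of $\Phi$'s committed output as either ``frozen'' (contributing a pair $(i_j,b_j)$) or ``tracking'' — matches the paper's strategy, and the freeze/track analysis is exactly the $\tau_k$ case distinction in the paper's proof. But your proposal imports unnecessary machinery and has one genuine gap. There is no finite injury in the paper's construction: the $\beta_k$ are built by arbitrary finite extensions of strings $\sigma_k[s]$, each stage handles one quadruple, permanently extends every $\sigma_k$ by a fixed small number of bits, and never revisits earlier decisions, so there is nothing to injure and no markers to initialize. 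Relatedly, your plan to secure $\beta_j\equiv_1\beta_k$ by ``encoding all rows into a single c.e.\ set'' misreads the setting: the $\beta_k$ are not c.e.\ and need not be. Since each stage only commits finitely many bits, one simply reserves a periodic set of positions for a direct bit-level coding, $\beta_k(4\langle j,m\rangle)=\beta_j(m)$, which is well-defined because $m<4\langle j,m\rangle$ and immediately gives $1$-equivalence of all rows.

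The substantive gap is the missing case in your per-stage analysis. You assume that after committing on a probe, $(\Phi_e,\Psi_e)$ effectively realises some truth-table of the form $\vec c\mapsto\psi(F\v{i}{b}(\vec c))$ with $\psi\in\{\mathrm{id},1-\mathrm{id}\}$. This is only the situation when $\Psi_e$ eventually returns \emph{distinct} answers on the two possible oracle bits $0$ and $1$; nothing forces this. The paper splits into cases: if $\Psi_e$ never distinguishes the two oracle answers on the probe, then its output is some fixed $1-b$, and one must extend $\vec\beta$ to force $G$ to output $b$ on the realised truth-value vector. That this is possible uses the observation that the failure of $(3)$ implies $G$ is non-constant — if $G$ were constant, it would equal $F\v{i}{b}$ or $1-F\v{i}{b}$ for $\vec i=(1,\dots,n)$ and a suitable $\vec b$. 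Only when $\Psi_e$ does distinguish does the frozen/tracking classification of $\Phi_e$'s output come into play, yielding concrete $\vec i,\vec b$ for which the failure of $(3)$ gives a falsifying $\vec x$. The paper does not prove a global ``product structure'' classification of all possible behaviours of $\Phi_e$ (the part you flag as delicate and leave unresolved); instead it reads off the finitely many committed $\tau_k$'s at that stage and works with those directly, which sidesteps your difficulty entirely.
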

\begin{proof}
(1) $\Rightarrow$ (2) is trivial.

\medskip

\noindent (3) $\Rightarrow$ (1): Suppose that $G =F\v{i}{b}$ and fix any sequence $\vec{\beta}$. We let the pre-processing $\Phi(\gamma_1,\dots,\gamma_n)=(\delta_1,\dots,\delta_n)$, where
\[\delta_j=\begin{cases}
0^\omega, \text{ if there is an $\ell$ such that $j=i_\ell$ and $b_\ell=0$,} \\
1^\omega, \text{ if there is an $\ell$ such that $j=i_\ell$ and $b_\ell=1$,}\\
\gamma_j, \text{ otherwise},
\end{cases}\]
and use the identify function as the post-processing. Then for any~$(\gamma_1,\dots,\gamma_n)$,
\begin{align*}
	&s_{\vec{\beta}\uh n}^G(\gamma_1,\dots,\gamma_n)\\
	=\;&G\left(s_{\beta_1}(\gamma_1),\dots,s_{\beta_n}(\gamma_n)\right)\\
	=\;&F\v{i}{b}\left(s_{\beta_1}(\gamma_1),\dots,s_{\beta_n}(\gamma_n)\right)\\
	=\;&F\left(s_{\beta_1}(\delta_1),\dots,s_{\beta_n}(\delta_n)\right)\\
	=\;&s_{\vec{\beta}\uh n}^F(\delta_1,\dots,\delta_n)\\
	=\;&s_{\vec{\beta}\uh n}^F(\Phi(\gamma_1,\dots,\gamma_n)).
\end{align*}
Hence we have ${s_{\vec{\beta}\uh n}^G \leq_{sW} s_{\vec{\beta}\uh n}^F}$. If $G =1-F\v{i}{b}$ then use the same pre-processing~$\Phi$ as above together with the post-processing function that maps $0$ to $1$ and $1$ to $0$.

\medskip

\noindent $\neg(3) \Rightarrow \neg(2)$: We construct the sequence $\vec{\beta} =(\beta_1,\beta_2,\dots)$ simultaneously by finite extensions. Consider a list of all quadruples $(G,F,\Phi,\Psi)$ where $\Phi$~and~$\Psi$ are Turing functionals and $G$ and $F$ are $n$-dimensional truth tables for some~$n>1$ for which~(3) does not hold. 
	
Initially we let $\sigma_k[0]$ be the empty string for all $k$. At each stage $s+1$ with $s \geq 0$, 
for each $k \in \omega$ we have some $\sigma_k[s]$ given and need to find a suitable extension $\sigma_k[s+1]\sqsupset\sigma_k[s]$. This will then allow us to let $\beta_k=\cup_s\sigma_k[s]$ for each $k \in \omega$.

We act for the $s$-th quadruple $(G,F,\Phi,\Psi)$. Let $n=n(G)=n(F)$. The values of $\beta_k$ for $k>n$ are obviously irrelevant for $G$ and $F$, so we simply let $\sigma_k[s+1]=\sigma_k[s]*000$ for each $k>n$. For $k\leq n$, we choose $\sigma_k$ as follows:

Consider if there exists an $m\in \omega$ such that
\begin{itemize}
\item[(i)] for $(\tau_1,\tau_2,\dots,\tau_n):=\Phi(\sigma_1[s]*01^m,\sigma_2[s]*01^m,\dots,\sigma_n[s]*01^m)$ we have $\min_{j\leq n}\{|\tau_j|\}\geq\max_{j\leq n}\{|\sigma_j[s]|\}+3$, and
\item[(ii)] $\Psi(\sigma_1[s]*01^m,\sigma_2[s]*01^m,\dots,\sigma_n[s]*01^m,b)$ is defined for both $b=0$ and $b=1$, but with unequal outputs.
\end{itemize}
There are several cases to consider. If no $m$ satisfying (i) exists then we let $\sigma_k[s+1]=\sigma_k[s]*000$ for every $k\leq n$. If no $m$ satisfying~(ii) exists then there is some $b\in\{0,1\}$ such that 
\begin{align*}
&b\neq  \Psi(\sigma_1[s]*01^\omega,\sigma_2[s]*01^\omega,\dots,\sigma_n[s]*01^\omega,0)\\
\text{and}\quad & b\neq  \Psi(\sigma_1[s]*01^\omega,\sigma_2[s]*01^\omega,\dots,\sigma_n[s]*01^\omega,1).\phantom{\text{and}\quad}
\end{align*}
Since condition (3) of the theorem does not hold for the pair $(G,F)$, $G$ is not a constant function and therefore there is some tuple of truth values $(x_1,\dots,x_n)$ such that $G(x_1,\dots,x_n)=b$. Now for each $k\leq n$ we take $\sigma_k[s+1]=\sigma_k[s]*000$ if $x_k=1$ and take $\sigma_k[s+1]=\sigma_k[s]*100$ otherwise.

Finally assume that the least $m$ satisfying (i) and (ii) exists. First assume 
\begin{align*}
	& \Psi(\sigma_1[s]*01^m,\sigma_2[s]*01^m,\dots,\sigma_n[s]*01^m,0)=0\\
	\text{and}\quad & \Psi(\sigma_1[s]*01^m,\sigma_2[s]*01^m,\dots,\sigma_n[s]*01^m,1)=1.\phantom{\text{and}\quad}
\end{align*}
Consider the set of all numbers $k\leq n$ such that $\tau_k$ is either to the left of $\sigma_k[s]*001$ or to the right of
$\sigma_k[s]*100$. Let $i_1<i_2<\dots<i_{u_0}$ be a listing of all such $k$ (this sequence could of course be empty), and for each $u\leq u_0$, let $b_u=0$ if $\tau_{i_u}$ is left of $\sigma_{i_u}[s]*001$, and let $b_u=1$ otherwise. Let $\vec{b}=(b_1,\dots,b_{u_0})$. Now by the assumption on $G$ and $F$, there is some tuple of truth values $(x_1,\dots,x_n)$ such that $G(x_1,\dots,x_n)\neq F^{\vec{b}}_{\vec{i}}(x_1,\dots,x_n)$.
To fix $\sigma_k[s+1]$ for $k\leq n$, we proceed as follows:
\begin{itemize}
\item[(iii)] If $k\neq i_u$ for any $u$ then let $\sigma_k[s+1]=\sigma_k[s]*000$ if $x_k=1$, otherwise let $\sigma_k[s+1]=\sigma_k[s]*110$.
\item[(iv)] If $k=i_u$ for some $u\leq u_0$ then let $\sigma_k[s+1]=\sigma_k[s]*001$ if $x_k=1$, otherwise let $\sigma_k[s+1]=\sigma_k[s]*100$.
\end{itemize}
This concludes the actions at stage $s+1$. If
\begin{align*}
	& \Psi(\sigma_1[s]*01^m,\sigma_2[s]*01^m,\dots,\sigma_n[s]*01^m,0)=1\\
	\text{and}\quad & \Psi(\sigma_1[s]*01^m,\sigma_2[s]*01^m,\dots,\sigma_n[s]*01^m,1)=0\phantom{\text{and}\quad}
\end{align*}
we work with a tuple $(x_1,\dots,x_n)$ such that $G(x_1,\dots,x_n)\neq 1-F^{\vec{b}}_{\vec{i}}(x_1,\dots,x_n)$ instead.

\medskip

Now we check that, for any quadruple $(G,F,\Phi,\Psi)$, the pair of Turing functionals $\Phi,\Psi$ does not witness $s_{\vec{\beta}\uh n}^G \leq_{W} s_{\vec{\beta}\uh n}^F$. Suppose that they do, and suppose that $(G,F,\Phi,\Psi)$ is handled by the construction at stage $s+1$. Let $\vec{\sigma}[s]=(\sigma_1[s]*01^\omega,\dots,\sigma_n[s]*01^\omega)$. Some $m$ satisfying (i) must be found as $\Phi\left(\vec{\sigma}[s]\right)$ must output a tuple of strings of infinite length. If no $m$ satisfying (ii) exists then by construction we would have chosen each $\beta_k$ so that $s_{\beta_k}(\sigma_k[s]*01^\omega)=x_k$. Thus, 
\begin{align*}
	& s_{\vec{\beta}\uh n}^G\left(\vec{\sigma}[s]\right)=G(x_1,\dots,x_n)=b\neq \Psi\left(\vec{\sigma}[s],0\right)\\
	\text{and}\quad & s_{\vec{\beta}\uh n}^G\left(\vec{\sigma}[s]\right)\neq \Psi\left(\vec{\sigma}[s],1\right)\!. \phantom{\text{and}\quad}
\end{align*}
Therefore, in this case, the post-processing $\Psi$ does not work correctly on the input $\vec{\sigma}[s]$, which is a contradiction.

Thus we can assume that some least $m$ satisfying (i) and (ii) exists at stage~$s+1$. W.l.o.g.\ we can also assume $\Psi\left(\vec{\sigma}[s],0\right)=0$ and ${\Psi\left(\vec{\sigma}[s],1\right)=1}$; in the other case, where the post-processing flips the bit, we can argue the same way as below except with $1-F^{\vec{b}}_{\vec{i}}(x_1,\dots,x_n)$ in place of $F^{\vec{b}}_{\vec{i}}(x_1,\dots,x_n)$. 

By (iii)~and~(iv), for any $k\leq n$ we have $s_{\beta_k}\left(\sigma_k[s]*01^\omega\right)=x_k$. However, in the case where $k= i_u$ for some $u\leq u_0$, $b_u=0$ implies that $\tau_{k}$ is to the left of $\sigma_{k}[s]*001$ which implies that $\tau_k$ is left of $\beta_k$. Similarly if $b_u=1$ then $\tau_k$ is to the right of $\beta_k$, which means that $s_{\beta_{i_u}}(\tau_{i_u})=b_u$. On the other hand, if $k\neq i_u$ for any $u$, then, as $\tau_k$ is sufficiently long, it must be to the right of $\sigma_k[s]*000$ and also to the left of $\sigma_k[s]*110$. This means that $x_k=1$ implies that $\beta_k$ is left of $\tau_k$ and if $x_k=0$ then $\beta_k$ is right of $\tau_k$ and therefore for such a $k$ we have $s_{\beta_{k}}(\tau_{k})=x_k$.

We have $s_{\vec{\beta}\uh n}^F(\tau_1,\dots,\tau_n)=F(s_{\beta_1}(\tau_1),\dots,s_{\beta_n}(\tau_n))$, and since $k=i_u$ implies that ${s_{\beta_{k}}(\tau_{k})=b_u}$ while otherwise $s_{\beta_{k}}(\tau_{k})=x_k$, we conclude that 
\[F(s_{\beta_1}(\tau_1),\dots,s_{\beta_n}(\tau_n))=F^{\vec{b}}_{\vec{i}}(x_1,\dots,x_n)\neq G(x_1,\dots,x_n)=s_{\vec{\beta}\uh n}^G(\vec{\sigma}[s]),\]
which means that the post-processing does not act correctly on the input $\vec{\sigma}[s]$ which is a contradiction.

Now to observe that the inputs can be made $1$-equivalent, note that every step of the construction decides the next three bits of each $\beta_k$. Therefore we can use every fourth bit of $\beta_k$ to encode one bit of some other $\beta_j$. More precisely, define $\beta_k(4\langle j,m\rangle)=\beta_j(m)$ for every $k,j,m$; this is well-defined as $m<4\langle j,m\rangle$. Then $\beta_j\equiv_1 \beta_k$ for every $j,k$.
\end{proof}

\section{Rich truth-tables vs.\ multiple single-dimensional instances}\label{jqewhfjksadjkvf}

Let $\oplus_n$ denote addition modulo $2$ of $n$ bits. It seems intuitively clear that from a structural perspective $\oplus_{n}$ is strictly less ``rich'' than  $\oplus_{n+1}$, and that that should be true independently of the threshold values used. This intuition is confirmed by the following statement.
\begin{Theorem}\label{fjhdshjfhjasjh}
	For each $n>0$ and all proper $\alpha_1,\dots,\alpha_{n+1} \in 2^\omega$ we have 	
	\[s^{\oplus_{n}}_{\alpha_1,\dots,\alpha_n} \srwn s^{\oplus_{n+1}}_{\alpha_1,\dots,\alpha_{n+1}},\]
	where we write $A \srwn\! B$ to mean that both $A \leq_{sW}\! B$ and $B \nleq_{W}\! A$ hold.
\end{Theorem}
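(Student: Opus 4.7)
The plan is to prove each direction of the strict comparison separately.

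For the strong reduction $s^{\oplus_{n}}_{\alpha_1,\dots,\alpha_n}\leq_{sW} s^{\oplus_{n+1}}_{\alpha_1,\dots,\alpha_{n+1}}$, I would take as pre-processing the map sending $(x_1,\dots,x_n)$ to $(x_1,\dots,x_n,0^\omega)$ and as post-processing the identity. Since $\alpha_{n+1}$ is proper, $0^\omega<_{lex}\alpha_{n+1}$ and hence $s_{\alpha_{n+1}}(0^\omega)=0$, so the appended coordinate contributes nothing to the XOR and the outputs agree.

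For the non-reduction $s^{\oplus_{n+1}}_{\alpha_1,\dots,\alpha_{n+1}}\nleq_W s^{\oplus_n}_{\alpha_1,\dots,\alpha_n}$, assume towards a contradiction that $(\Phi,\Psi)$ witnesses such a reduction. The plan is a sequential perturbation argument in the spirit of Fact~\ref{fact:boundary}. Let $\vec{x}_0=(\alpha_1,\dots,\alpha_{n+1})$ and, for $k=1,\dots,n+1$, define $\vec{x}_k$ by replacing the $k$-th coordinate of $\vec{x}_{k-1}$ by $(\alpha_k\uh M_k) * 0^\omega$, for an $M_k$ to be chosen large enough inductively. Properness of $\alpha_k$ gives $(\alpha_k\uh M_k)*0^\omega<_{lex}\alpha_k$, so $s_{\alpha_k}(x_k)$ flips from $1$ to $0$ at step $k$ and $s^{\oplus_{n+1}}(\vec{x}_k)$ flips at every step. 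Taking each $M_k$ larger than $\Psi$'s use on $\vec{\alpha}$ makes $\Psi(\vec{x}_k,b)=\Psi(\vec{\alpha},b)$ throughout, which correctness forces to be a bijection in $b$; the oracle bit $s^{\oplus_n}(\Phi(\vec{x}_k))=\bigoplus_{j\leq n}s_{\alpha_j}(\Phi(\vec{x}_k)_j)$ must then also flip at every step, so an odd number of its $n$ summands flips between consecutive steps.

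The key device is the potential function $J_k^*=\{\,j\leq n : \Phi(\vec{x}_k)_j=\alpha_j\,\}$. I would show $|J_k^*|\leq|J_{k-1}^*|-1$ at every step. For any $j\notin J_{k-1}^*$ the inequality $\Phi(\vec{x}_{k-1})_j\neq\alpha_j$ is witnessed by a finite prefix of $\Phi(\vec{x}_{k-1})_j$ produced by $\Phi$ using only finitely many input bits; choosing $M_k$ larger than all such uses preserves this witnessing prefix in $\Phi(\vec{x}_k)_j$, so for these $j$ the summand $s_{\alpha_j}(\Phi(\vec{x}_k)_j)$ is unchanged and also $\Phi(\vec{x}_k)_j\neq\alpha_j$. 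Hence $J_k^*\subseteq J_{k-1}^*$ and every flipping summand comes from $J_{k-1}^*$; moreover each flipping $j\in J_{k-1}^*$ has $\Phi(\vec{x}_k)_j$ strictly on the opposite side of $\alpha_j$, so $j\notin J_k^*$. Since the oracle bit flips, at least one $j$ flips, giving the claimed decrease. Iterating from $|J_0^*|\leq n$ through $n+1$ steps yields $|J_{n+1}^*|\leq -1$, a contradiction.

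The main obstacle is the inductive bookkeeping for the $M_k$: each $M_k$ must simultaneously dominate $\Psi$'s use on $\vec{\alpha}$ and the $\Phi$-uses that witness $\Phi(\vec{x}_{k-1})_j\neq\alpha_j$ for every $j\notin J_{k-1}^*$, and the latter depend on the earlier choices $M_1,\dots,M_{k-1}$. Since each bound is finite and we only need existence, this poses no essential difficulty, but it is the detail most liable to be mishandled if stated carelessly.
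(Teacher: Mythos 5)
Your proposal is correct in substance but takes a genuinely different route from the paper's. For the reducibility direction, appending $0^\omega$ is exactly the trivial witness the paper has in mind. For the non-reduction, the paper does not argue directly; instead it derives it from Theorem~\ref{dsfugewahdfawe} --- $s^{\oplus_{n+1}}_{\alpha_1,\dots,\alpha_{n+1}}\nleq_W s_{\alpha'_1}\times\cdots\times s_{\alpha'_n}$ for \emph{arbitrary} proper $\alpha'_1,\dots,\alpha'_n$ --- combined with the trivial $s^{\oplus_n}_{\alpha_1,\dots,\alpha_n}\leq_{sW} s_{\alpha_1}\times\cdots\times s_{\alpha_n}$. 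Both your direct argument and the paper's proof of Theorem~\ref{dsfugewahdfawe} are pigeonhole arguments driven by prefix preservation, but the potentials and step structure differ: you track $|J_k^*|$, the number of $\Phi$-output coordinates lying \emph{exactly} on their thresholds, and show it strictly drops at each of $n+1$ perturbations since the $\oplus_n$-bit must flip and only such coordinates can flip; the paper tracks $t$, the number of $\Phi$-output coordinates $\geq_{lex}$ their thresholds, uses pigeonhole to find a coordinate whose perturbation cannot decrease $t$, and concludes that the whole oracle output is unchanged while the correct answer flips. Your version is shorter and more self-contained, but proves less than the paper's route: it does not immediately yield Theorem~\ref{dsfugewahdfawe}, where the oracle reveals all $n$ bits rather than their parity alone and which the paper reuses in the corollaries. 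One inaccuracy to fix: correctness only forces $\Psi(\vec{\alpha},b)$ to converge for the actual oracle bit $b=s^{\oplus_n}(\Phi(\vec{\alpha}))$; on the other bit $\Psi$ may diverge, so ``$\Psi(\vec{\alpha},\cdot)$ is a bijection'' is not justified. The alternation of the oracle bits still follows, provided $M_k$ is chosen beyond $\Psi$'s use on $(\vec{x}_{k-1},b_{k-1})$ --- which does converge by correctness on $\vec{x}_{k-1}$ --- so that $b_k=b_{k-1}$ would force $\Psi(\vec{x}_k,b_k)=\Psi(\vec{x}_{k-1},b_{k-1})=s^{\oplus_{n+1}}_{\alpha_1,\dots,\alpha_{n+1}}(\vec{x}_{k-1})\neq s^{\oplus_{n+1}}_{\alpha_1,\dots,\alpha_{n+1}}(\vec{x}_k)$, a contradiction. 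This is exactly the bookkeeping subtlety you flagged.
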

While the reducibility part of the statement is trivially true, the strictness is a corollary of the the next theorem combined with the obvious fact that 
\[s^{\oplus_{n}}_{\alpha_1,\dots,\alpha_{n}}\leq_{sW} s_{\alpha_1}\times\dots\times s_{\alpha_n}\]
for all $\alpha_1,\dots,\alpha_{n+1} \in 2^\omega$. It is natural to wonder how the resulting non-collapsing hierarchy relates to the hierarchy 
\[s_\alpha \srwn s_\alpha \times s_\alpha \srwn (s_\alpha)^3 \srwn \dots.\]
The answer is provided by the following results.
\begin{Theorem}\label{dsfugewahdfawe}
For each $n>0$ and all proper $\alpha_1,\dots,\alpha_{n+1},\alpha'_1,\dots, \alpha'_n\in 2^\omega$ we have
	\[s^{\oplus_{n+1}}_{\alpha_1,\dots,\alpha_{n+1}}\nleq_W s_{\alpha'_1}\times\dots\times s_{\alpha'_n}.\]
\end{Theorem}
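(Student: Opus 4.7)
The plan is to assume for contradiction that $(\Phi,\Psi)$ witnesses $s^{\oplus_{n+1}}_{\alpha_1,\dots,\alpha_{n+1}}\leq_W s_{\alpha'_1}\times\dots\times s_{\alpha'_n}$. For each subset $S\subseteq\{1,\dots,n+1\}$ I would define an input $\vec x^S$ whose $i$-th coordinate is $\alpha_i$ when $i\notin S$ and is some $\tilde\alpha_i<_{lex}\alpha_i$ with $\tilde\alpha_i\uh L_i=\alpha_i\uh L_i$ when $i\in S$, where the matching lengths $L_i$ are to be chosen inductively to be very long. Write $\vec b^S=(s_{\alpha'_j}(\Phi(\vec x^S)_j))_{j=1}^n$ and $J^S=\{j:\Phi(\vec x^S)_j=\alpha'_j\}$. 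The goal is to show by induction on $|S|$ that $|J^S|\leq n-|S|$, which for $|S|=n$ forces $J^S=\emptyset$ and contradicts the following non-emptiness lemma.

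The lemma is a direct generalization of Fact~\ref{fact:boundary} to reductions into products: if $s_\alpha\leq_W\prod_{j=1}^n s_{\beta_j}$ via $(\Phi',\Psi')$ with $\alpha$ proper, then $\Phi'(\alpha)_j=\beta_j$ for at least one $j$. The argument mirrors Fact~\ref{fact:boundary}: otherwise continuity of $\Phi'$ makes each coordinate of the black-box vector $\vec b$ locally constant around $\alpha$, and combined with the use of $\Psi'$ and properness of $\alpha$ this produces an $\alpha'<_{lex}\alpha$ sharing enough prefix with $\alpha$ that the reduction returns the wrong bit. Applying this lemma to the $1$-dimensional slice of $(\Phi,\Psi)$ obtained by varying a coordinate $i^*\notin S$ while fixing the others as in $\vec x^S$ — a slice that, after an XOR computation, is itself a reduction of $s_{\alpha_{i^*}}$ or its negation to $\prod_j s_{\alpha'_j}$ — yields $J^S\neq\emptyset$ whenever $S\subsetneq\{1,\dots,n+1\}$.

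The inductive step is the strict monotonicity $J^{S\cup\{i^*\}}\subsetneq J^S$, valid when $L_{i^*}$ is chosen large enough relative to $\vec x^S$. For the containment, any $j\notin J^S$ satisfies $\Phi(\vec x^S)_j\neq\alpha'_j$ at some fixed finite bit, so continuity of $\Phi$ (with $L_{i^*}$ exceeding the associated modulus) keeps $\Phi(\vec x^{S\cup\{i^*\}})_j$ on the same side of $\alpha'_j$. For the strictness, if $\vec b^{S\cup\{i^*\}}=\vec b^S$ then taking $L_{i^*}$ beyond the use of $\Psi$ on $(\vec x^S,\vec b^S)$ gives $\Psi(\vec x^{S\cup\{i^*\}},\vec b^{S\cup\{i^*\}})=\Psi(\vec x^S,\vec b^S)$, whereas the reduction's correctness says these equal $s^{\oplus_{n+1}}_{\alpha_1,\dots,\alpha_{n+1}}$ evaluated at $\vec x^{S\cup\{i^*\}}$ and $\vec x^S$ respectively, values that differ by a single XOR, a contradiction.

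Finally I would run the construction in stages $k=1,\dots,n$, choosing each $L_{i_{k+1}}$ to dominate the finitely many use and continuity bounds arising from the already-defined $\vec x^S$ with $S\subseteq\{i_1,\dots,i_k\}$. After $n$ stages we obtain $|J^{\{i_1,\dots,i_n\}}|\leq 0$ while the non-emptiness lemma forces this set to be non-empty, completing the contradiction. The main obstacle is the nested bookkeeping: at stage $k$ there are $2^k$ inputs $\vec x^S$ for which we need to bound both the $\Psi$-use and the $\Phi$-continuity modulus simultaneously, but since all such bounds are finite at every stage this bookkeeping goes through.
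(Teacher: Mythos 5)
Your proposal is correct, but it takes a genuinely different route from the paper. The paper's argument is a single pigeonhole step: setting $t(\vec{\delta})=\sum_{j} s_{\alpha'_j}(\delta_j)$, it notes that since $t\bigl(\Phi(\alpha_1,\dots,\alpha_{n+1})\bigr)\leq n$ while there are $n+1$ input coordinates, there must be a coordinate $i$ and a choice of $\beta_{i+1},\dots,\beta_{n+1}$ such that further perturbing $\alpha_i$ never decreases $t$; since a sufficiently small leftward perturbation of $\alpha_i$ can only cause each $s_{\alpha'_j}$ to decrease or stay constant, all components must stay constant, and then $\Psi$ returns the same bit even though the XOR flips. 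You instead perform $n$ successive leftward perturbations, tracking the set $J^S$ of output coordinates landing exactly on their thresholds, and show by induction that $J^S$ must shrink strictly at each step; the contradiction at the end comes from a product version of Fact~\ref{fact:boundary} which the paper never states but which is correct (and holds for both $s_{\alpha_{i^*}}$ and its negation, as you need for the slice argument). Both arguments hinge on the same core observations — properness of the $\alpha_i$ to generate a small leftward perturbation, and the one-sided local behaviour of $s_{\alpha'_j}\circ\Phi$ under such perturbations — but the paper extracts the contradiction in one carefully chosen step whereas you accumulate it over $n$ rounds. One small point to make explicit when writing this up: the strictness step establishes $\vec b^{S\cup\{i^*\}}\neq\vec b^S$, and to convert this to $J^{S\cup\{i^*\}}\subsetneq J^S$ you need to combine it with the already-established containment $J^{S\cup\{i^*\}}\subseteq J^S$ and the observation that, under that containment, any coordinate where $\vec b$ differs must lie in $J^S\setminus J^{S\cup\{i^*\}}$. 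The paper's version is shorter; yours is more modular and isolates a reusable product-reduction lemma, at the cost of the nested bookkeeping you already flag.
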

\begin{proof}
Fix $n>0$ and proper sequences ${\alpha_1,\dots,\alpha_{n+1},\alpha'_1,\dots \alpha'_n \in 2^\omega}$. Suppose that $ s^{\oplus_{n+1}}_{\alpha_1,\dots,\alpha_{n+1}}\leq_W s_{\alpha'_1}\times\cdots\times s_{\alpha'_n}$ via the pre-processing $\Phi$ and post-processing $\Psi$. For any sequence of inputs $(\delta_1,\dots,\delta_n)$, we let $t\left(\delta_1,\dots,\delta_n\right)$ denote $\sum_{j=1}^n s_{\alpha'_j}(\delta_j)$.

Since $t\left(\Phi(\alpha_1,\dots,\alpha_{n+1})\right)$ is at most $n$ and there are $n+1$ many coordinates in the input to $\Phi$, there must be a $1 \leq i\leq n+1$ such that there is a sequence $\beta_{i+1},\beta_{i+2},\dots,\beta_{n+1}$ with the property that for every choice of~$\beta_i\in 2^\omega$,
\begin{align*}
&t\left(\Phi(\alpha_1,\dots,\alpha_{i-1},\beta_i,\beta_{i+1},\dots,\beta_{n+1})\right)\\
\geq\;&t\left(\Phi(\alpha_1,\dots,\alpha_{i-1},\alpha_i,\beta_{i+1},\dots,\beta_{n+1})\right).
\end{align*}
Fix the largest such $i$ and a corresponding sequence $\beta_{i+1},\dots,\beta_{n+1}$. Write 
\[(\delta_1,\dots,\delta_n)=\Phi(\alpha_1,\dots,\alpha_{i-1},\alpha_i,\beta_{i+1},\dots,\beta_{n+1})\] 
and fix a length $u$ long enough such that $\Phi(\alpha_1\uh u,\dots,\alpha_i\uh u,\dots,\beta_{n+1}\uh u)$ produces an output sequence of initial segments of $\delta_1,\dots,\delta_n$ such that for each~$j\leq  n$, if $\delta_j\neq \alpha'_j$, then the initial segment of $\delta_j$ produced is long enough to witness that inequality. 

We furthermore assume that $u$ is long enough so that 
	\begin{align*}
		&\Psi\left(  \alpha_1\uh u,\dots,\alpha_i\uh u,\beta_{i+1}\uh u,\dots,\beta_{n+1}\uh u, (s_{\alpha'_1}\times\cdots\times s_{\alpha'_n})(\delta_1,\dots,\delta_n)\right){\downarrow}\\
		=\;&	s^{\oplus_{n+1}}_{\alpha_1,\dots,\alpha_{n+1}}(\alpha_1,\dots,\alpha_{i-1},\alpha_i,\beta_{i+1},\dots,\beta_{n+1}).
	\end{align*}
As $\alpha_i$ is proper we can pick some $\beta_i$ with $\beta_i<_{lex}\alpha_i$ and ${\beta_i\uh u=\alpha_i\uh u}$. Then 
\begin{align*}
&s^{\oplus_{n+1}}_{\alpha_1,\dots,\alpha_{n+1}}(\alpha_1,\dots,\alpha_{i-1},\beta_i,\beta_{i+1},\dots,\beta_{n+1})\\
\neq\;& s^{\oplus_{n+1}}_{\alpha_1,\dots,\alpha_{n+1}}(\alpha_1,\dots,\alpha_{i-1},\alpha_i,\beta_{i+1},\dots,\beta_{n+1}).
\end{align*}
Now consider the pre- and post-processing on the input sequence
\[(\alpha_1,\dots,\alpha_{i-1},\beta_i,\beta_{i+1},\dots,\beta_{n+1}).\] 
By the choice of $i$ and of $\beta_{i+1},\dots,\beta_{n+1}$,
\begin{align*}
	&t\left(\Phi(\alpha_1,\dots,\alpha_{i-1},\beta_i,\beta_{i+1},\dots,\beta_{n+1})\right)\\
	\geq\;& t\left(\Phi(\alpha_1,\dots,\alpha_{i-1},\alpha_i,\beta_{i+1},\dots,\beta_{n+1})\right). 
\end{align*}
Write $(\delta_1',\dots,\delta_n')=\Phi(\alpha_1,\dots,\alpha_{i-1},\beta_i,\beta_{i+1},\dots,\beta_{n+1})$. By the choice of $u$, and the fact that $\beta_i\sqsupset\alpha_i\uh u$, we see that $s_{\alpha_j'}(\delta_j')\leq s_{\alpha_j'}(\delta_j)$ 
for each $j\leq n$. Thus $s_{\alpha_j'}(\delta_j')= s_{\alpha_j'}(\delta_j)$ for every $j\leq n$, and we have 
\begin{align*}
	&(s_{\alpha'_1}\times\cdots\times s_{\alpha'_n})\left(\Phi(\alpha_1,\dots,\alpha_{i-1},\beta_i,\beta_{i+1},\dots,\beta_{n+1})\right)\\
	=\;&(s_{\alpha'_1}\times\cdots\times s_{\alpha'_n})\left(\delta_1,\dots,\delta_n\right).
\end{align*}
Again by the choice of $u$ and the equality in the preceding line, we have
\begin{align*}
&\Psi\Big(  \alpha_1,\dots,\alpha_{i-1},\beta_i,\beta_{i+1},\dots,\beta_{n+1},\\[-0.5em]
&\qquad\qquad\quad(s_{\alpha'_1}\times\cdots\times s_{\alpha'_n})\left(\Phi(\alpha_1,\dots,\alpha_{i-1},\beta_i,\beta_{i+1},\dots,\beta_{n+1})\right)\!\Big)\\
=\;&\Psi\left(  \alpha_1\uh u,\dots,\alpha_i\uh u,\beta_{i+1}\uh u,\dots,\beta_{n+1}\uh u , (s_{\alpha'_1}\times\cdots\times s_{\alpha'_n})(\delta_1,\dots,\delta_n)\right)\\
=\;&s^{\oplus_{n+1}}_{\alpha_1,\dots,\alpha_{n+1}}(\alpha_1,\dots,\alpha_{i-1},\alpha_i,\beta_{i+1},\dots,\beta_{n+1})\\
\neq\;& s^{\oplus_{n+1}}_{\alpha_1,\dots,\alpha_{n+1}}(\alpha_1,\dots,\alpha_{i-1},\beta_i,\beta_{i+1},\dots,\beta_{n+1}).
\end{align*}
This is a contradiction since then the post-processing does not act correctly on  input~$(\alpha_1,\dots,\alpha_{i-1},\beta_i,\beta_{i+1},\dots,\beta_{n+1})$.
\end{proof}
In the other direction we even obtain a significantly stronger separation.
\begin{Theorem}
For each $n>0$ and all proper $\alpha_1,\dots,\alpha_{n},\beta,\gamma\in 2^\omega$ we have 
\[s_\beta \times s_\gamma \nleq_W s^{\oplus_{n}}_{\alpha_1,\dots,\alpha_n}.\]
\end{Theorem}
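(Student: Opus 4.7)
The plan is to derive a contradiction from the assumed reduction, exploiting that $s^{\oplus_n}_{\alpha_1,\dots,\alpha_n}$ delivers only a single bit of information, whereas four distinct output values of $s_\beta\times s_\gamma$ occur arbitrarily close to $(\beta,\gamma)$. Assume for contradiction that a reduction is witnessed by pre-processing $\Phi$ and post-processing $\Psi$. Let $b_0=s^{\oplus_n}_{\alpha_1,\dots,\alpha_n}(\Phi(\beta,\gamma))$; then $\Psi(\beta,\gamma,b_0)=(1,1)$, and I would choose $U$ large enough that this computation queries at most the first $U$ bits of each coordinate of its input. The central observation is that for every $(\beta',\gamma')$ with $\beta'\uh U=\beta\uh U$ and $\gamma'\uh U=\gamma\uh U$, the functional $\Psi$ on input $(\beta',\gamma',b_0)$ performs exactly the same queries and halts with the same output $(1,1)$. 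Consequently, whenever $(s_\beta(\beta'),s_\gamma(\gamma'))\neq(1,1)$, correctness of the reduction forces the true oracle bit on $(\beta',\gamma')$ to be $1-b_0$, as otherwise $\Psi(\beta',\gamma',b_0)=(1,1)$ would be the (incorrect) output.

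Next I would carry out a two-stage perturbation. First, pick $\beta^-<_{lex}\beta$ with $\beta^-\uh U=\beta\uh U$, which exists by properness of $\beta$. The input $(\beta^-,\gamma)$ has correct output $(0,1)$, so by the observation the oracle bit on it is $1-b_0$ and $\Psi(\beta^-,\gamma,1-b_0)=(0,1)$; let $v$ bound the use of this computation on its second coordinate. Then pick $\gamma^-<_{lex}\gamma$ with $\gamma^-\uh\max(U,v)=\gamma\uh\max(U,v)$, available by properness of $\gamma$. The input $(\beta^-,\gamma^-)$ has correct output $(0,0)$, again forcing the oracle bit to be $1-b_0$ and therefore $\Psi(\beta^-,\gamma^-,1-b_0)=(0,0)$. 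But the computation $\Psi(\beta^-,\gamma^-,1-b_0)$ reads at most the first $v$ bits of $\gamma^-$ when producing its output, and $\gamma^-$ agrees with $\gamma$ on these bits, so it runs identically to $\Psi(\beta^-,\gamma,1-b_0)$ and yields $(0,1)\neq(0,0)$, the desired contradiction.

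The main obstacle I anticipate is that $\Psi(\beta,\gamma,1-b_0)$ need not be defined, which rules out a clean two-branches-at-the-basepoint argument symmetric in the two oracle bits. The plan circumvents this by extracting the use bound $v$ from the computation on the \emph{already perturbed} input $(\beta^-,\gamma)$, on which $\Psi$ is forced to halt by correctness of the reduction. Sequencing the two perturbations (first in the $\beta$-coordinate, and only beyond $v$ in the $\gamma$-coordinate) then pins a single well-defined computation of $\Psi$ at oracle bit $1-b_0$ to simultaneously produce the incompatible outputs $(0,1)$ and $(0,0)$.
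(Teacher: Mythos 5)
Your proposal is correct and follows essentially the same two-stage perturbation as the paper's own proof: first perturb $\beta$ below its use bound for $\Psi(\cdot,\cdot,b_0)$, observe the oracle bit must flip, then perturb $\gamma$ below the use bound of the forced computation $\Psi(\beta^-,\gamma,1-b_0)$. If anything, your write-up is a touch more explicit than the paper's in tracking which oracle bit is actually returned at $(\beta^-,\gamma^-)$ and why the contradiction is forced.
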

\begin{proof}
Assume otherwise. Fix $n>0$ and proper ${\alpha_1,\dots,\alpha_{n},\beta,\gamma\in 2^\omega}$ such that $s_\beta \times s_\gamma \leq_W s^{\oplus_{n}}_{\alpha_1,\dots,\alpha_n}$ via $(\Phi,\Psi)$. Write~$b$ for $s^{\oplus_{n}}_{\alpha_1,\dots,\alpha_n}(\Phi(\beta,\gamma))$ and fix~$n$ large enough so that $\Psi(\beta\uh n, \gamma \uh n,b)$ is defined; such an~$n$ must exist by assumption and we must have $\Psi(\beta\uh n, \gamma \uh n,b)=(1,1)$.  Now extend $\beta \uh n$ to some ${\beta' < \beta}$; this is possible because $\beta$ is proper. Fix $n'$ large enough so that ${\Psi(\beta'\uh n', \gamma \uh n',1-b)}$ is defined; again, such an~$n'$ exists by assumption and satisfies
${\Psi(\beta'\uh n', \gamma \uh n',1-b)}=(0,1)$. Finally extend $\gamma \uh n'$ to some $\gamma' < \gamma$.
By construction,
$\Psi(\beta',\gamma',b) =\Psi(\beta\uh n, \gamma \uh n,b)=(1,1)$;
this is an incorrect output since $s_\beta \times s_\gamma (\beta',\gamma')=(0,0)$.
\end{proof}
\goodbreak
We summarize our results in the following corollary and in Figure~\ref{fdsjkhasdjhkfdgjnasdhfsd}.
\begin{Corollary}
	The following results hold for all $n\geq 2$:
	\begin{itemize}
		\item[(i)] $s^{\oplus_n}_\alpha \nleq_W (s_\alpha)^{n-1}$.
		\item[(ii)] $ s^{\oplus_{n}}_\alpha \srwn (s_\alpha)^n$; in fact, even $s_\alpha \times s_\alpha \nleq_W s^{\oplus_{n}}_\alpha$.
	\end{itemize}
\end{Corollary}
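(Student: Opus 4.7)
The plan is to derive both parts as direct specializations of the two preceding theorems to the uniform-threshold case, combined with one easy reducibility observation.

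For part (i), I would apply Theorem \ref{dsfugewahdfawe} with the theorem's ``$n$'' replaced by $n-1$ (which is permissible because the corollary assumes $n \geq 2$), and with all of $\alpha_1,\dots,\alpha_n$ and $\alpha'_1,\dots,\alpha'_{n-1}$ taken to be the single sequence $\alpha$. The conclusion is then exactly $s^{\oplus_n}_\alpha \nleq_W s_\alpha \times \cdots \times s_\alpha = (s_\alpha)^{n-1}$, as required.

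For part (ii), the strong reduction $s^{\oplus_n}_\alpha \leq_{sW} (s_\alpha)^n$ is the specialization-to-uniform-threshold of the observation preceding Theorem \ref{dsfugewahdfawe}: use the identity pre-processing and post-process by XOR-ing the $n$ returned bits. The ``in fact'' clause $s_\alpha \times s_\alpha \nleq_W s^{\oplus_n}_\alpha$ is obtained by applying the unlabeled theorem immediately preceding the corollary (``$s_\beta \times s_\gamma \nleq_W s^{\oplus_n}_{\alpha_1,\dots,\alpha_n}$'') with all of $\alpha_1,\dots,\alpha_n, \beta, \gamma$ set equal to $\alpha$. From this stronger statement the remaining non-reducibility $(s_\alpha)^n \nleq_W s^{\oplus_n}_\alpha$ follows by contraposition: if we had $(s_\alpha)^n \leq_W s^{\oplus_n}_\alpha$, then composing with the trivial reduction $s_\alpha \times s_\alpha \leq_{sW} (s_\alpha)^n$ (which exists for $n \geq 2$ by projecting to the first two coordinates and requesting only those) would yield $s_\alpha \times s_\alpha \leq_W s^{\oplus_n}_\alpha$, contradicting the ``in fact'' statement we just established.

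There is no real obstacle here; the corollary is essentially a bookkeeping step that reads off the uniform-threshold specializations of the two preceding theorems and records the resulting hierarchy picture, which is then visualized in Figure~\ref{fdsjkhasdjhkfdgjnasdhfsd}.
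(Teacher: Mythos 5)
Your proposal is correct and matches the paper's intent: the corollary is explicitly introduced as a summary ("We summarize our results in the following corollary and in Figure~\ref{fdsjkhasdjhkfdgjnasdhfsd}"), so the paper gives it no proof of its own and it is meant to be read off exactly as you do — part (i) by specializing Theorem~\ref{dsfugewahdfawe} to the uniform threshold $\alpha$, the reducibility in part~(ii) from the observation $s^{\oplus_n}_{\alpha_1,\dots,\alpha_n} \leq_{sW} s_{\alpha_1}\times\dots\times s_{\alpha_n}$ stated just before Theorem~\ref{dsfugewahdfawe}, and the strictness (including the "in fact" clause and the composition with $s_\alpha \times s_\alpha \leq_{sW} (s_\alpha)^n$) from the unlabeled theorem immediately preceding the corollary. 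The only thing worth making explicit is that $\alpha$ must be taken proper, since all the cited theorems carry that hypothesis; otherwise your argument is complete.
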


\begin{figure}[tb]
	\begin{center}
		\begin{tikzpicture}[scale=.35,auto=left,every node/.style={fill=black!15},y=-2cm]
			\input{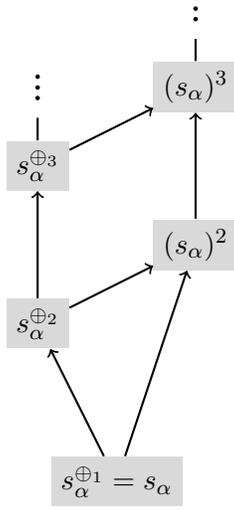}
		\end{tikzpicture}
		\caption{\label{fdsjkhasdjhkfdgjnasdhfsd}Products and parity truth tables; all reductions that are true in general are drawn.}
		\label{fig:diagramrreert}
	\end{center}
\end{figure}

The results above contrast with the following observation about the relation between the parity operation and single dimension step functions with multiple discontinuity points.
\begin{Proposition}
	For all $n\in \omega$ and $(\alpha_1,\dots,\alpha_n) \in (2^\omega)^n$ with 
	\[0 < \alpha_1 < \alpha_2 < \dots < \alpha_n < 1\]
	we have
	\[s_{(\alpha_1,\dots,\alpha_n)} <_W s^{\oplus_n}_{\alpha_1,\dots,\alpha_n}.\]
\end{Proposition}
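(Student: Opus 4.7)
Proof proposal. For the reduction, the plan is to use the diagonal pre-processing $\Phi(x) = (x, x, \dots, x)$, so that the oracle returns $\bigoplus_i s_{\alpha_i}(x) = s_{(\alpha_1,\dots,\alpha_n)}(x) \bmod 2$. Since the $\alpha_j$'s are distinct and lie in $(0,1)$, I fix dyadic rationals $r_0 < r_1 < \dots < r_n$ strictly separating them (so $r_{j-1} < \alpha_j < r_j$). Given $x$ and the parity bit $b$, the post-processing effectively determines some index $j$ with $x \in (r_{j-1}, r_{j+1})$, thereby narrowing $s_{(\alpha_1,\dots,\alpha_n)}(x)$ to $\{j-1, j, j+1\}$; the parity bit $b$, together with a further observation of $x$'s position relative to $r_j$ if necessary, then pins down the correct value.

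For the strictness, I assume a reduction $(\Phi, \Psi)$ witnessing $s^{\oplus_{n}}_{\alpha_1,\dots,\alpha_n} \leq_W s_{(\alpha_1,\dots,\alpha_n)}$ exists and analyze its behavior at the corner $\vec{\alpha} = (\alpha_1,\dots,\alpha_n)$. Let $y^* := \Phi(\vec{\alpha})$. If $y^* \notin \{\alpha_1,\dots,\alpha_n\}$, the function $s_{(\alpha_1,\dots,\alpha_n)}$ is locally constant at $y^*$; by continuity of $\Phi$, the oracle returns a fixed value on a cylinder neighborhood of $\vec{\alpha}$, so $\Psi(\vec{x}, \cdot)$ is locally constant in $\vec{x}$, contradicting that $\bigoplus_i s_{\alpha_i}(x_i)$ takes both values arbitrarily close to $\vec{\alpha}$. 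Otherwise $y^* = \alpha_{j_0}$ for some $j_0$, and near $\vec{\alpha}$ the oracle only outputs $j_0 - 1$ or $j_0$. Setting $q(\vec{x}) := s_{\alpha_{j_0}}(\Phi(\vec{x}))$, a standard finite-use argument applied to $\Psi(\cdot, j_0)$ at $\vec{\alpha}$ and to $\Psi(\cdot, j_0 - 1)$ at some nearby input where $q = 0$ (which must exist, or else the previous case's argument applies) yields that for $\vec{x}$ sharing a sufficiently long prefix with $\vec{\alpha}$, $q(\vec{x}) = 1$ iff $\bigoplus_i s_{\alpha_i}(x_i) = n \bmod 2$.

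Fixing every coordinate except $x_i, x_{i'}$ to the corresponding $\alpha_\ell$ and writing $\tilde{\Phi}(u,v)$ for the resulting continuous map $(2^\omega)^2 \to 2^\omega$, the previous equivalence forces
\[
\tilde{\Phi}^{-1}\bigl(\{y \in 2^\omega : y \geq \alpha_{j_0}\}\bigr) \cap N \;=\; D \cap N
\]
on a cylinder neighborhood $N$ of $(\alpha_i, \alpha_{i'})$, where
\[
D = \bigl\{(u,v) : (u < \alpha_i \wedge v < \alpha_{i'}) \vee (u \geq \alpha_i \wedge v \geq \alpha_{i'})\bigr\}.
\]
The main obstacle, and the essential step, is to extract a topological contradiction from this: since $\alpha_{j_0}$ is proper, $\{y : y \geq \alpha_{j_0}\}$ is closed in $2^\omega$, hence its preimage under the continuous $\tilde{\Phi}$ is closed in $(2^\omega)^2$, so $D \cap N$ must be closed in $N$. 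But $D$ is not closed: any point $(\alpha_i, v_0) \in N$ with $v_0 < \alpha_{i'}$ lies in $D^c$, yet (by the properness of $\alpha_i$) it is a limit within $N$ of points $(u', v_0)$ with $u' < \alpha_i$, all of which lie in $D$.
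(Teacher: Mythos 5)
Your reduction is fine and essentially matches the paper's idea (both fix dyadics bracketing the $\alpha_j$'s and read a prefix of $x$ to localize it; the paper routes $x$ into a single coordinate of the pre-processing output, you use the diagonal, which works just as well). The case split on $y^\ast=\Phi(\vec{\alpha})$ in the strictness part and the final topological step are also sound \emph{if} the intermediate ``iff'' holds. The gap is precisely in the second paragraph: the claim that a finite-use argument at $\vec\alpha$ (oracle answer $j_0$) together with a finite-use argument at ``some nearby input where $q=0$'' (oracle answer $j_0-1$) yields, on a fixed cylinder around $\vec\alpha$, that $q(\vec{x})=1$ iff $\bigoplus_i s_{\alpha_i}(x_i)=n\bmod 2$.

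The first finite-use application is fine: it gives a cylinder $C\ni\vec\alpha$ with $\Psi(\cdot,j_0)\equiv n\bmod 2$, hence the forward direction $q(\vec x)=1\Rightarrow \text{parity}(\vec x)=n\bmod 2$ on $C$. But the second finite-use application, at a point $\vec{x}'$ with $q(\vec{x}')=0$, only gives you constancy of $\Psi(\cdot,j_0-1)$ on a cylinder around $\vec{x}'$, which need not contain $\vec\alpha$; moreover the constant value is $\bigoplus_i s_{\alpha_i}(x'_i)$, which a priori could equal $n\bmod 2$, in which case it gives you nothing at all toward the converse. With only the forward direction you get $\tilde\Phi^{-1}(\{y\geq\alpha_{j_0}\})\cap N\subseteq D\cap N$, which does not make $D\cap N$ closed in $N$ and so your topological contradiction does not go through.

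What is missing is the careful choice of the perturbation, which is exactly where the paper's proof differs. The paper (equivalently, your argument restricted to two live coordinates $i,i'$) first moves only $x_i$: taking $\beta'<\alpha_i$ agreeing with $\alpha_i$ beyond the use of $C$ and keeping $x_{i'}=\alpha_{i'}$, the parity flips, so the forward direction you already have forces $q=0$ at this new input $\vec{x}^1$. Now the finite use of $\Phi$ (to certify $\Phi(\vec{x}^1)<\alpha_{j_0}$) and of $\Psi(\cdot,j_0-1)$ at $\vec{x}^1$ give some length $u'$; crucially $x_{i'}$ is \emph{still} equal to $\alpha_{i'}$ at this stage, so one can perturb it past $u'$ to flip the parity back, while the oracle answer and the value of $\Psi(\cdot,j_0-1)$ are frozen. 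That is the contradiction. Your argument skips this ordering: by choosing ``some nearby input where $q=0$'' without controlling which coordinates still agree with $\vec\alpha$, you lose the room to perturb a second time, and the iff does not follow. (A minor side note: the closedness of $\{y\geq\alpha_{j_0}\}$ holds for arbitrary $\alpha_{j_0}$, not because it is proper; properness of $\alpha_i,\alpha_{i'}$ is what you actually use to exhibit the limit point $(\alpha_i,v_0)$.)
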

\begin{proof}
	To see the reducibility, choose finite binary sequences such that
		\[\alpha_1 < q^\ell_1 <q^u_1 < \alpha_2 < q^\ell_2 <q^u_2 <\alpha_3 < \dots < \alpha_{n-1} < q^\ell_{n-1} <q^u_{n-1} < \alpha_n.\]
	The pre-processing $\Phi$ inspects an input $x$ until it detects
	\[x < q_1^u \vee \exists i\colon q^\ell_i < x < q^u_{i+1} \vee q_{n-1}^\ell < x,\]
	and then
	\begin{itemize}
		\item 	outputs $(x,0^\omega,\dots,0^\omega)$, if $x < q_1^u$ holds;
		\item 	outputs $(0^\omega,\dots,0^\omega,x)$, if $x > q_{n-1}^\ell$ holds;
		\item 	otherwise outputs $(0^\omega,\dots,0^\omega,\underset{\substack{\uparrow\\\makebox[0pt][c]{\scriptsize $(i+1)$-th component}}}{x},0^\omega,\dots,0^\omega)$;
	\end{itemize}
	in case that more than one case applies, $\Phi$ arbitrarily uses whichever it detects first.
		
	On input $(x,b)$, the post-processing $\Psi$ initially inspects the original input~$x$ to detect which case the pre-processing has picked; let the $k$-th component be the one into which $\Phi$ has chosen to output $x$. Then $\Psi$ outputs $k-1+b$.
	
	\medskip
	
Now we prove the strictness. We shall in fact prove that $s_{\beta,\gamma}^{\oplus_2}\nleq_W s_{(\alpha_1,\dots,\alpha_n)}$ for any proper $\beta,\gamma\in 2^\omega$. Assume towards a contradiction that there are proper strings $\beta$ and $\gamma$ such that $s_{\beta,\gamma}^{\oplus_2}\leq_W s_{(\alpha_1,\dots,\alpha_n)}$ is witnessed by the pre-processing~$\Phi$ and post-processing $\Psi$.

\smallskip

We first assume that $\Phi(\beta,\gamma)=\alpha_i$ for some $i\leq n$. Then there is some large enough length $u$ so that the output of $\Phi(\beta\uh u,\gamma\uh u)$ is extended by only $\alpha_i$ and not by any other $\alpha_j$. We also assume that $u$ is long enough so that $\Psi(\beta\uh u,\gamma\uh u, i)=0$, where we recall that $i=s_{(\alpha_1,\dots,\alpha_n)}(\Phi(\beta,\gamma))$. Now extend $\beta\uh u$ to some $\beta'<_{lex}\beta$, which is possible since $\beta$ is proper. Since $\Phi(\beta',\gamma)$ extends $\Phi(\beta\uh u,\gamma\uh u)$, we must have 
\[\alpha_{i-1}<_{lex}\Phi(\beta',\gamma)<_{lex}\alpha_{i+1},\]
where by $\alpha_0$ we mean $0$ and by $\alpha_{n+1}$ we mean $1$. Clearly we cannot have $\Phi(\beta',\gamma)=\alpha_i$ or $\Phi(\beta',\gamma)>_{lex}\alpha_i$, since otherwise $s_{(\alpha_1,\dots,\alpha_n)}(\Phi(\beta',\gamma))=i$ and the post-processing would not act correctly on input~$(\beta',\gamma)$. Thus we must have $\Phi(\beta',\gamma)<_{lex} \alpha_i$. Fix some length $u'$ long enough so that $\Phi(\beta'\uh u',\gamma\uh u')$ is extended by none of $\alpha_1,\dots,\alpha_n$ and such that $\Psi(\beta'\uh u',\gamma\uh u',i-1)=1$. Now finally extend $\gamma\uh u'$ to a string $\gamma'<_{lex}\gamma$; this is again possible since $\gamma$ is proper. Observe that on the one hand $s_{\beta,\gamma}(\beta',\gamma')=0$, but on the other hand $\Phi(\beta',\gamma')$ extends $\Phi(\beta'\uh u',\gamma\uh u')$ and therefore $\alpha_{i-1}<_{lex}\Phi(\beta',\gamma')<_{lex}\alpha_i$. Then~${s_{(\alpha_1,\dots,\alpha_n)}(\Phi(\beta',\gamma'))=i-1}$ and thus 
\[\Psi(\beta',\gamma',s_{(\alpha_1,\dots,\alpha_n)}(\Phi(\beta',\gamma')))=\Psi(\beta'\uh u',\gamma\uh u',i-1)=1\neq s_{\beta,\gamma}(\beta',\gamma').\]

\smallskip

For the other case where ${\Phi(\beta,\gamma)\not\in\{\alpha_1,\dots,\alpha_n\}}$ we argue similarly.
\end{proof}

\section{Multidimensional step functions with identical thresholds}\label{hhdashdfhbjshjehjchjasd}

In this section, we will study the case where all thresholds are identical; as a result, we will see a structurally much more interesting picture emerge as a result than in Section~\ref{stepintwodim}.
The following two observations about the two-dimensional case give a first glimpse of why this change is important; we only sketch the proofs as the results are corollaries of a much more general statement formally proven below.
\begin{Proposition}
	For	$\alpha \in 2^\omega$ we have $s^{\vee}_{\alpha,\alpha}\equiv_{sW}s^{\wedge}_{\alpha,\alpha}\equiv_{sW} s_\alpha$.
\end{Proposition}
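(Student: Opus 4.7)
My plan is to establish all four reductions separately (the equivalence $s^{\vee}_{\alpha,\alpha} \equiv_{sW} s^{\wedge}_{\alpha,\alpha}$ then follows by transitivity), using only the identity function as post-processing so that strong Weihrauch equivalence follows directly. Throughout I assume $\alpha$ is proper so that $0^\omega <_{lex} \alpha \leq_{lex} 1^\omega$; the degenerate cases where $\alpha = 0^\omega$ or where $\alpha$ has only finitely many $1$'s are handled automatically, because then $s_\alpha$ is either constant or continuous and all reductions hold trivially.

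For the easy directions $s_\alpha \leq_{sW} s^{\vee}_{\alpha,\alpha}$ and $s_\alpha \leq_{sW} s^{\wedge}_{\alpha,\alpha}$, I would pad the one-dimensional input with a constant extreme value, using the pre-processing functions $x \mapsto (x, 0^\omega)$ and $x \mapsto (x, 1^\omega)$, respectively. Since $s_\alpha(0^\omega) = 0$ and $s_\alpha(1^\omega) = 1$ hold whenever $\alpha$ is proper, we obtain $s^{\vee}_{\alpha,\alpha}(x, 0^\omega) = s_\alpha(x) \vee 0 = s_\alpha(x)$ and $s^{\wedge}_{\alpha,\alpha}(x, 1^\omega) = s_\alpha(x) \wedge 1 = s_\alpha(x)$, as desired.

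For the converse directions $s^{\vee}_{\alpha,\alpha} \leq_{sW} s_\alpha$ and $s^{\wedge}_{\alpha,\alpha} \leq_{sW} s_\alpha$, the key observation is that the lexicographic maximum and minimum are computable as functions from $(2^\omega)^2$ to $2^\omega$. To produce bit $n$ of $\max_{lex}(x,y)$, one reads $x_0,\dots,x_n$ and $y_0,\dots,y_n$, locates the least position $k \leq n$ where they disagree (if any), and outputs the $n$-th bit of whichever input has a $1$ at position $k$; if no disagreement is found, one outputs the agreed-upon bit $x_n = y_n$, which must equal the corresponding bit of $\max_{lex}(x,y)$ regardless of which input eventually wins. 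The construction for $\min_{lex}$ is symmetric. Using $\Phi(x,y) = \max_{lex}(x,y)$ then witnesses the first reduction, since $\max_{lex}(x,y) \geq_{lex} \alpha$ iff at least one of $x,y$ is $\geq_{lex} \alpha$; and $\Phi(x,y) = \min_{lex}(x,y)$ witnesses the second, since $\min_{lex}(x,y) \geq_{lex} \alpha$ iff both are.

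The only point requiring mild care is the computability of $\max_{lex}$ and $\min_{lex}$; at first glance this looks suspicious because deciding $x <_{lex} y$ uniformly is impossible in general. The resolution, which is the main observation driving the proof, is that outputting any fixed finite prefix of $\max_{lex}(x,y)$ never requires us to have decided the comparison globally—when the comparison is still undecided at a given stage, the two inputs agree on the prefix being output, so either answer is correct.
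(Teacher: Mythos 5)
Your proof is correct and takes essentially the same approach as the paper: the nontrivial directions $s^{\vee}_{\alpha,\alpha}\leq_{sW} s_\alpha$ and $s^{\wedge}_{\alpha,\alpha}\leq_{sW} s_\alpha$ use the computable $\max_{lex}$ and $\min_{lex}$ as pre-processing, exactly as the paper does (you merely spell out the computability argument that the paper leaves implicit). The only minor difference is in the easy directions, where the paper uses the duplicating pre-processing $x \mapsto (x,x)$, which works uniformly for all $\alpha$ and avoids the small case-split you needed for the constant padding $x\mapsto(x,0^\omega)$ and $x\mapsto(x,1^\omega)$.
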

\begin{proof}[Proof sketch]
	It is easy to see that to show ${s^{\vee}_{\alpha,\alpha}\leq_{sW} s_\alpha}$ it is sufficient to use ${(x,y) \mapsto \max\{x,y\}}$ as the pre-processing function and the identity as the post-processing. Similarly, for ${s^{\wedge}_{\alpha,\alpha}\leq_{sW} s_\alpha}$, use $(x,y) \mapsto \min\{x,y\}$. To see $s_\alpha \leq_{sW} s^{\vee}_{\alpha,\alpha},\, s^{\wedge}_{\alpha,\alpha}$, simply use the pre-processing $x \mapsto (x,x)$.
\end{proof}
\begin{Proposition}
	For	$\alpha \in 2^\omega$ we have $s^{\rightarrow}_{\alpha,\alpha}\equiv_{sW}s^{\leftarrow}_{\alpha,\alpha}\equiv_{sW} s^{\leftrightarrow}_{\alpha,\alpha}$.
\end{Proposition}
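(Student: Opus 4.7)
The plan is to repeat the strategy of the previous proposition but with slightly more elaborate computable pre-processing functions, keeping the identity as the post-processing in every case. The central observation is that $s_\alpha\!\colon 2^\omega\to\{0,1\}$ is monotone with respect to $<_{lex}$, so
\[s_\alpha(\max\{x,y\})=s_\alpha(x)\vee s_\alpha(y) \quad\text{and}\quad s_\alpha(\min\{x,y\})=s_\alpha(x)\wedge s_\alpha(y),\]
where $\min$ and $\max$ are taken with respect to $<_{lex}$; both are computable as Turing functionals $(2^\omega)^2\to 2^\omega$ by reading prefixes of both inputs in parallel, outputting their longest common prefix, and committing to the appropriate one of the two inputs at the first position where they disagree.

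The reduction $s^{\rightarrow}_{\alpha,\alpha}\equiv_{sW}s^{\leftarrow}_{\alpha,\alpha}$ is then immediate via the coordinate swap $(x,y)\mapsto(y,x)$, using the Boolean identity $(a\to b)=(b\leftarrow a)$. For $s^{\rightarrow}_{\alpha,\alpha}\leq_{sW}s^{\leftrightarrow}_{\alpha,\alpha}$ I would verify by a short truth-table check the identity $(a\to b)=((a\vee b)\leftrightarrow b)$, which translates directly into the pre-processing $(x,y)\mapsto(\max\{x,y\},y)$; for the reverse direction $s^{\leftrightarrow}_{\alpha,\alpha}\leq_{sW}s^{\rightarrow}_{\alpha,\alpha}$ I would use the identity $(a\leftrightarrow b)=((a\vee b)\to(a\wedge b))$, which translates into the pre-processing $(x,y)\mapsto(\max\{x,y\},\min\{x,y\})$. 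The third equivalence $s^{\leftarrow}_{\alpha,\alpha}\equiv_{sW}s^{\leftrightarrow}_{\alpha,\alpha}$ then follows by transitivity.

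The only real obstacle is spotting the right Boolean identities; once they are in hand, monotonicity of $s_\alpha$ together with the computability of $\min$ and $\max$ closes the argument. As the authors indicate, all of this will be subsumed by the more general statement proved in Section~\ref{hhdashdfhbjshjehjchjasd}, so keeping the proof at this level of detail is appropriate here.
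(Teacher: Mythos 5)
Your proof is correct and follows essentially the same approach as the paper: computable pre-processing via $\min$/$\max$/coordinate swap with identity post-processing. The only difference is presentational — for $s^{\rightarrow}_{\alpha,\alpha}\leq_{sW}s^{\leftrightarrow}_{\alpha,\alpha}$ the paper describes a case-by-case construction that in effect computes $(x,y)\mapsto(x,\min\{x,y\})$, while your $(x,y)\mapsto(\max\{x,y\},y)$ guided by the Boolean identity $(a\to b)=((a\vee b)\leftrightarrow b)$ is a slightly cleaner formulation of the same idea.
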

\begin{proof}[Proof sketch]
	The post-processing for all reductions is again the identity function. Then to show  $s^{\leftrightarrow}_{\alpha,\alpha}\leq_{sW} s^{\rightarrow}_{\alpha,\alpha}$, we can use 
	\[(x,y) \mapsto (\max\{x,y\}, \min\{x,y\})\] as the pre-processing.
	
	For $s^{\rightarrow}_{\alpha,\alpha}\leq_{sW} s^{\leftrightarrow}_{\alpha,\alpha}$, assume we are given input $(x,y)$. Initially we output $(x,y)$ unmodified. If at some point it becomes clear from inspecting longer and longer initial segments of the inputs that $x\neq y$, then we need to react: If $x < y$, then continue the previously generated outputs in such a way that they become $(x,x)$ in the limit; otherwise, that is if we see $x > y$, then continue in such a way as to produce $(x,y)$. It is easy to check that this produces the required behaviour.
	
	For $s^{\leftarrow}_{\alpha,\alpha}\equiv_{sW} s^{\leftrightarrow}_{\alpha,\alpha}$, the argument is symmetric.
\end{proof}
Thus, for every $\alpha$,
\[s_\alpha\equiv_{sW} s^{\vee}_{\alpha,\alpha}\equiv_{sW} s^{\wedge}_{\alpha,\alpha} \srwn s^{\leftrightarrow}_{\alpha,\alpha}\equiv_{sW} s^{\rightarrow}_{\alpha,\alpha}\equiv_{sW} s^{\leftarrow}_{\alpha,\alpha},\tag{$\ast$}\label{sdfsdfsdfsdfsdfdsfsdfsdfdsfsdf}\]
where the strictness of the inequality will follow from the results below.
This seems to suggest that the ``richness level'' of the truth-table~$F$ influences the strength of the Weihrauch degree of $s_{\alpha,\alpha}^F$, and this is what we prove in a much more general form in the following. Thus, for the rest of the article we will study the case of equal thresholds while also generalising to higher dimensions.

Before we do so, however, we observe in the following lemma that all results that we will obtain below also hold for the slightly more general case where the thresholds $\alpha_1,\dots,\alpha_n$ are not actually required to be equal, but where they are only assumed to satisfy $s_{\alpha_i}\equiv_{sW} s_{\alpha_j}$ for all $i,j$. In the remainder of the article we choose to 
neglect this in order to simplify notations.
\begin{Lemma}
	Let $A$ be a subset of Cantor space such that for any $\alpha, \beta \in A$ we have $s_{\alpha}\equiv_{sW} s_{\beta}$, and let any $n$-dimensional truth table $F$ be given. Then for any
	$\alpha_1,\dots,\alpha_n\in A$ and any $\beta_1,\dots,\beta_n\in A$ we have
	$s_{\alpha_1, \dots, \alpha_n}^F\equiv_{sW}s_{\beta_1, \dots, \beta_n}^F$.
	
	In particular, 
we can write $s_\alpha^F$ instead of $s_{\alpha_1, \dots, \alpha_n}^F$, where $\alpha$ is chosen arbitrarily from $A$.
\end{Lemma}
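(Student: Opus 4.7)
The plan is to pass the coordinatewise strong Weihrauch equivalences through the truth table. By symmetry it suffices to prove $s^F_{\alpha_1,\dots,\alpha_n}\leq_{sW}s^F_{\beta_1,\dots,\beta_n}$. First I would dispose of the degenerate case: if some (hence every) element of $A$ fails to be proper, then the corresponding $s_\alpha$ is computable, and the hypothesized coordinatewise equivalence forces every $s_\beta$ with $\beta\in A$ to be computable as well. In that case both $s^F_{\alpha_1,\dots,\alpha_n}$ and $s^F_{\beta_1,\dots,\beta_n}$ are themselves computable and the lemma is trivial.

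So assume that each $\alpha_i$ and each $\beta_i$ is proper. For each $i\leq n$, fix a witnessing pair $(\Phi_i,\Psi_i)$ for $s_{\alpha_i}\leq_{sW}s_{\beta_i}$. Since $\alpha_i$ is proper, Fact~\ref{fact:boundary} forces $\Psi_i$ to be the identity; in particular, $s_{\beta_i}(\Phi_i(x))=s_{\alpha_i}(x)$ for every $x\in 2^\omega$.

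Next I would define the pre-processing coordinatewise by
\[
\Phi(x_1,\dots,x_n):=(\Phi_1(x_1),\dots,\Phi_n(x_n))
\]
and take the identity as the post-processing. A direct computation then gives
\[
s^F_{\beta_1,\dots,\beta_n}\bigl(\Phi(x_1,\dots,x_n)\bigr)=F\bigl(s_{\alpha_1}(x_1),\dots,s_{\alpha_n}(x_n)\bigr)=s^F_{\alpha_1,\dots,\alpha_n}(x_1,\dots,x_n),
\]
so this pair witnesses $s^F_{\alpha_1,\dots,\alpha_n}\leq_{sW}s^F_{\beta_1,\dots,\beta_n}$. The reverse reduction follows by the same argument after swapping the roles of the $\alpha_i$ and the $\beta_i$.

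Because Fact~\ref{fact:boundary} does essentially all of the work, I do not anticipate any serious obstacle. The one point worth flagging is that identity post-processing is really necessary here: if any $\Psi_i$ were, say, a bit-flip, then the reduction would produce $F$ applied to flipped bits rather than to $s_{\alpha_i}(x_i)$, which would amount to reducing to a \emph{different} truth table. This is exactly the trap that the properness hypothesis, via Fact~\ref{fact:boundary}, rules out.
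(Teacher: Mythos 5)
Your proof is correct and takes essentially the same route as the paper's: invoke Fact~\ref{fact:boundary} to force each $\Psi_i$ to be the identity, then use the product $\Phi_1\times\dots\times\Phi_n$ of the one-dimensional pre-processings together with identity post-processing. Your explicit treatment of the degenerate (non-proper) case is a sensible addition, though the paper implicitly excludes it since $s^F_{\alpha_1,\dots,\alpha_n}$ is only defined for proper thresholds.
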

\begin{proof}
	By assumption, for all $i$, we have  $s_{\alpha_i}\leq_{sW} s_{\beta_i}$ via some pre-processing function~$\Phi_i$ and by Fact~\ref{fact:boundary} all post-processing functions are the identity. Then we can use $\Phi_1 \times \dots \times \Phi_n$ as the pre-processing and the identity as the post-processing function to see $s_{\alpha_1, \dots, \alpha_n}^F\leq_{sW}s_{\beta_1, \dots, \beta_n}^F$. The other direction is symmetric.
\end{proof}
So fix any proper $\alpha \in 2^\omega$; we will now study $s_\alpha^F$ in full generality.

\begin{Definition}
For $v\in 2^n$ we write $t(v)$ for the number of $j$ such that $v(j)=1$.
	
For $v,v'\in 2^n$, we write $v\subseteq v'$ if for every $j<n$ we have that $v(j)=1$ implies $v'(j)=1$. Write $v\subset v'$ if $v\subseteq v'$ and $v\neq v'$.
We write $v\subset^1 v'$ if $v\subset v'$ and there is no $v''$ such that $v\subset v''\subset v'$.

Given a truth-table $F$, if $v_1,\dots,v_k \in 2^{n(F)}$ is the longest sequence  such that for every $i<k$, $v_i\subset v_{i+1}$ and $F(v_i)\neq F(v_{i+1})$, we write $l(F)$ for $k-1$.

Clearly, $l(F)\leq n(F)$. If $l(F)= n(F)$ we call $F$ {\em complete}.
\end{Definition}
Informally, $l(F)$ is the maximal possible number of bit flips occurring in the output of $F$ when we flip $F$'s input bits from $0$ to $1$ one by one. For a finite binary string $w$ with $|w|=n$ we will also write $l(w)$ for 
\[|\{1\leq i \leq n-1 \colon w(i) \neq w(i+1)\}|,\]
that is, for the number of changes between $0$ and $1$ that we see when we go through the string.

Recall that $\oplus_n$ is addition modulo $2$ of $n$ bits; it is easy to see that $\oplus_n$ is complete.

\begin{Definition}
If $n$ is clear from context, we call {\em level $i$} the set
\[L_i=\{v\in 2^n\colon t(v)=i\}.\]
For a truth-table~$F$ we say that its level $i \leq n(F)$ is {\em homogeneous} if $F$ is constant on $L_i$. If level~$i$ of $F$ is homogeneous for all $0 \leq i \leq n(F)$, then we call $F$ {\em homogeneous}.
\end{Definition}

\begin{figure}[htb]
	\begin{center}
		\begin{tikzpicture}[scale=.5,auto=left,every node/.style={fill=black!15},y=-2cm]
			\input{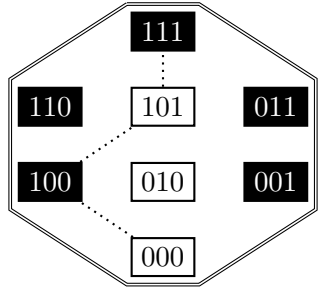}
		\end{tikzpicture}
		\caption{A truth table $F$ that is complete but inhomogeneous for $n=3$, with a path witnessing its completeness. A filled rectangle stands for combinations of input truth values that $F$ evaluates to true, an empty rectangle for those that are evaluated to false.}
		\label{fig:diagertertertram}
	\end{center}
\end{figure}
So $F$ being homogeneous means that $F$'s output depends solely on the number of $1$'s in its input, not on their positions. It is easy to see that $\oplus$ and $1-\oplus$ are the only truth-tables that are both homogeneous and complete.

The results in (\ref{sdfsdfsdfsdfsdfdsfsdfsdfdsfsdf}) can be phrased in terms of $l(F)$: Notice  ${l(\wedge)=l(\vee)=1}$ and $l(\rightarrow)=l(\leftarrow)=l(\leftrightarrow)=2$; thus, (\ref{sdfsdfsdfsdfsdfdsfsdfsdfdsfsdf}) seems to suggest that $l(F)=l(F')$ if and only if $s^F_\alpha\equiv_{sW} s^{F'}_\alpha$. We shall prove that this is indeed the case: the degree of $s^F_\alpha$ is completely determined by $l(F)$.
\begin{Theorem}\label{maintheorem}
Fix $n>1$ and an $n$-dimensional truth-table  $F$. Then \[s^F_\alpha\equiv_{sW} s^{\oplus_{l(F)}}_\alpha\!\!\!\!\!\!\!\!.\]
\end{Theorem}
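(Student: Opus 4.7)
My plan is to prove both directions of the equivalence by exploiting the definition of $l(F)$ as the maximum alternation length of $F$ along subset chains in $2^n$. Throughout, write $k = l(F)$.

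For the direction $s^{\oplus_k}_\alpha \leq_{sW} s^F_\alpha$, I would fix an alternating chain $v_1 \subsetneq v_2 \subsetneq \cdots \subsetneq v_{k+1}$ witnessing $l(F) = k$, set $D_\ell = v_{\ell+1} \setminus v_\ell$, and define the pre-processing $\Phi(y_1,\ldots,y_k) = (x_1,\ldots,x_n)$ by $x_j = 1^\omega$ for $j \in v_1$, $x_j = 0^\omega$ for $j \notin v_{k+1}$, and $x_j$ equal to the $\ell$-th largest (with respect to $<_{lex}$) among $y_1,\ldots,y_k$ for $j \in D_\ell$. Since the $\ell$-th largest can be written as $\max_{|S|=\ell} \min_{i \in S} y_i$, and since lexicographic max and min on $2^\omega$ are computable Turing functionals (bits of the output can be produced either once a distinguishing position is located or while both inputs still agree bit-by-bit), $\Phi$ is computable. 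If $t$ of the $y_i$'s satisfy $y_i \geq_{lex} \alpha$, then the $\ell$-th largest is $\geq_{lex} \alpha$ iff $\ell \leq t$, so the $s_\alpha$-vector of $(x_1,\ldots,x_n)$ is exactly $v_{t+1}$. Hence the black box returns $F(v_{t+1}) = F(v_1) \oplus (t \bmod 2)$ by the alternation property, and the constant post-processing ``XOR with $F(v_1)$'' recovers the parity $t \bmod 2$.

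For the direction $s^F_\alpha \leq_{sW} s^{\oplus_k}_\alpha$, my plan is to reduce to the combinatorial claim that every truth-table $F$ with $l(F) = k$ can be written as $F = g_1 \oplus \cdots \oplus g_k \oplus F(0^n)$ with each $g_i \colon 2^n \to \{0,1\}$ monotone. To this end I would define $a_F(v)$ as the maximum number of alternations along a chain $u_1 \subsetneq \cdots \subsetneq u_r = v$ with $F(u_i) \neq F(u_{i+1})$ (so $a_F(v) \leq k$), and prove by induction on $v$ that $a_F$ is monotone and satisfies $a_F(v) \equiv F(v) \oplus F(0^n) \pmod 2$. Setting $g_i(v) = [a_F(v) \geq i]$ then produces $k$ monotone Boolean functions with $g_1(v) \oplus \cdots \oplus g_k(v) = a_F(v) \bmod 2 = F(v) \oplus F(0^n)$. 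To realize each $g_i$ continuously, I would write $g_i$ in monotone disjunctive normal form, say $g_i = \bigvee_{w \in \mathrm{Min}(g_i)} \bigwedge_{j \colon w_j=1} v_j$, and define $Y_i(x_1,\ldots,x_n) = \max_{w} \min_{j \colon w_j=1} x_j$; then $s_\alpha(Y_i) = g_i(s_\alpha(x_1),\ldots,s_\alpha(x_n))$ because $s_\alpha$ commutes with lex-max and lex-min. The pre-processing outputs $(Y_1,\ldots,Y_k)$ and the post-processing XORs the returned parity with the constant $F(0^n)$.

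The main obstacle is the identity $a_F(v) \equiv F(v) \oplus F(0^n) \pmod 2$: its proof hinges on the observation that any $u \subsetneq v$ with $F(u) \neq F(v)$ must, by the inductive hypothesis, have $a_F(u)$ of parity opposite to $F(v) \oplus F(0^n)$, so that $a_F(v) = 1 + a_F(u)$ lands on the correct parity (and this simultaneously shows that all candidate $u$'s contribute the same parity, so the maximum is unambiguous mod $2$). The edge case $l(F) = 0$ forces $F$ to be constant on $2^n$, in which case both $s^F_\alpha$ and the empty-XOR version $s^{\oplus_0}_\alpha$ are computable and the equivalence is trivial.
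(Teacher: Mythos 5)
Your proof is correct, and for the harder direction $s^F_\alpha\leq_{sW} s^{\oplus_{l(F)}}_\alpha$ it takes a genuinely different route from the paper. The paper proceeds by first establishing $\oplus_l\equiv_{sB}K_{n,l}\equiv_{sB}H_{n,w}$ (where $H_{n,w}$ is the homogeneous truth-table determined by a word $w$), and then proves $F\leq_{sB}K_{n,l}$ by a sequence of \emph{local flips}: a key technical lemma shows that, under certain conditions on a $v$, one may flip $F(v)$ to $1-F(v)$ without increasing the degree, and repeating this top-down over the levels of $2^n$ eventually transforms $F$ into $K_{n,l}$ while preserving $l(F)$ at every step. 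Your argument instead gives a \emph{global decomposition} $F = g_1\oplus\cdots\oplus g_{l(F)}\oplus F(0^n)$ with each $g_i$ monotone, via the alternation-depth function $a_F(v)$ (maximum number of $F$-alternations along a $\subsetneq$-chain ending at $v$), which you show is monotone, bounded by $l(F)$, and satisfies $a_F(v)\equiv F(v)\oplus F(0^n)\pmod 2$. You then realize each $g_i$ continuously by its monotone DNF, exploiting that $s_\alpha$ is a lattice homomorphism with respect to lexicographic max/min. I checked the details: the recurrence $a_F(v)=\max\bigl(\{0\}\cup\{1+a_F(u): u\subsetneq v,\ F(u)\neq F(v)\}\bigr)$ establishes both monotonicity and the parity identity by induction, $a_F(1^n)=l(F)$ holds because a maximal alternating chain not ending at $1^n$ can always be redirected to end there, and each $g_i$ is non-constant so it has a nontrivial monotone DNF. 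This decomposition approach is cleaner and arguably more illuminating than the paper's iterated flipping; what the paper's route buys instead is that the intermediate objects $K_{n,l}$ and $H_{n,w}$ are interesting in their own right and its Lemma for a single flip is reusable. Your first direction $\oplus_{l(F)}\leq_{sW}F$ (filling the chain differences $D_\ell=v_{\ell+1}\setminus v_\ell$ with the $\ell$-th largest input via iterated $\max$/$\min$) is essentially the same idea as the paper's reduction of $H_{n,w'}$ to $F$, just aimed directly at $\oplus_{l(F)}$ without the detour through $H_{n,w'}$.
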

Note that $\oplus_{l(F)}$ is an $l(F)$-dimensional truth-table, that is, a truth table possibly taking fewer bits as inputs than~$F$. Therefore the theorem immediately leads to the following corollary.
\begin{Corollary}\label{cor:equiv}
	Let $F$ be any Boolean function with $l(F)=1$ and fix any non-computable $\alpha$ and $\beta$. Then $s_\alpha\equiv_{sW} s_\beta$ if and only if $s_{\alpha,\alpha}^F\equiv_{sW} s_{\beta,\beta}^F$.
\end{Corollary}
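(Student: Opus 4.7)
The corollary should follow almost immediately from Theorem~\ref{maintheorem}, so my plan is essentially to unpack that theorem in the special case $l(F)=1$ and check that the chain of equivalences closes up.

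First I would note the role of the non-computability hypothesis: if $\alpha\in 2^\omega$ is non-computable then in particular $\alpha$ contains infinitely many $1$'s, so $\alpha$ is proper, and the same applies to $\beta$. This is what is needed in order to invoke Theorem~\ref{maintheorem} for the chosen thresholds. (Non-computability also rules out trivial cases where the $s_\alpha$-degree would degenerate.)

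Next I would apply Theorem~\ref{maintheorem} to the $2$-dimensional truth-table $F$, which is permitted since $n(F)=2>1$. Because $l(F)=1$, the theorem yields $s^F_{\alpha,\alpha}\equiv_{sW} s^{\oplus_{1}}_{\alpha}$, and symmetrically $s^F_{\beta,\beta}\equiv_{sW} s^{\oplus_{1}}_{\beta}$. Now $\oplus_{1}$ is addition modulo $2$ of a single bit, i.e.\ the identity on $\{0,1\}$, so unfolding the definition of $s^{\oplus_1}_\gamma$ gives
\[
s^{\oplus_{1}}_{\gamma}(x)=\oplus_{1}(s_\gamma(x))=s_\gamma(x)
\]
for every $\gamma\in 2^\omega$ and every $x\in 2^\omega$. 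Hence $s^F_{\alpha,\alpha}\equiv_{sW} s_\alpha$ and $s^F_{\beta,\beta}\equiv_{sW} s_\beta$.

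Finally I would conclude both directions of the biconditional by transitivity of $\equiv_{sW}$: if $s_\alpha\equiv_{sW} s_\beta$ then $s^F_{\alpha,\alpha}\equiv_{sW} s_\alpha\equiv_{sW} s_\beta\equiv_{sW} s^F_{\beta,\beta}$, and the converse is obtained by chaining the same equivalences in the opposite direction. There is no real obstacle here beyond the verification that $l(F)=1$ is the exact hypothesis letting Theorem~\ref{maintheorem} collapse $s^F_{\alpha,\alpha}$ to a single one-dimensional step function; the work has already been done in proving the main theorem itself.
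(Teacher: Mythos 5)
Your proof is correct and follows exactly the route the paper intends: non-computability forces each threshold to have infinitely many ones, hence to be proper, which is the hypothesis under which Theorem~\ref{maintheorem} applies; that theorem with $l(F)=1$ collapses $s^F_{\alpha,\alpha}$ to $s^{\oplus_1}_{\alpha}=s_{\alpha}$ (and likewise for $\beta$), and transitivity of $\equiv_{sW}$ gives both directions of the biconditional. The paper gives no separate argument for this corollary, and yours supplies precisely the bookkeeping it leaves to the reader.
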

Note further that the theorem implies that whether $s^F_\alpha\equiv_{sW} s^{F'}_\alpha$ depends solely on $F$ and~$F'$. This allows us to introduce the shorthand notations $F \leq_{sB} F'$ for $s^F_\alpha\leq_{sW} s^{F'}_\alpha$ and $F \leq_{B} F'$ for $s^F_\alpha\leq_{W} s^{F'}_\alpha$, as well as $F \srbn F'$ for $F \leq_{sB} F'$ and $F' \nleq_{B} F$. Then, when restricting to the case of identical thresholds,  Theorem~\ref{fjhdshjfhjasjh} becomes the following corollary; note that this implies that the hierarchy does not collapse.
\begin{Corollary}
	For each $n>1$, $\oplus_{n} \srbn \oplus_{n+1}$.
\end{Corollary}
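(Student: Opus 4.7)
The corollary follows by direct specialization of Theorem~\ref{fjhdshjfhjasjh}. My plan is to fix any proper $\alpha \in 2^\omega$ and invoke that theorem with the uniform choice $\alpha_1 = \alpha_2 = \dots = \alpha_{n+1} = \alpha$. The theorem then yields $s^{\oplus_n}_\alpha \srwn s^{\oplus_{n+1}}_\alpha$, which by the shorthand introduced in the paragraph immediately preceding the corollary is exactly $\oplus_n \srbn \oplus_{n+1}$.

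For a self-contained presentation, I would briefly unpack the two halves. For the reducibility $\oplus_n \leq_{sB} \oplus_{n+1}$, I would use as pre-processing the map $\Phi(x_1,\dots,x_n) = (x_1,\dots,x_n,0^\omega)$ together with the identity as post-processing. Since $\alpha$ is proper, $0^\omega <_{lex} \alpha$ and thus $s_\alpha(0^\omega)=0$, so the appended coordinate contributes nothing to the XOR and the outputs of $s^{\oplus_{n+1}}_\alpha \circ \Phi$ and $s^{\oplus_n}_\alpha$ coincide.

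For the strictness $\oplus_{n+1} \nleq_B \oplus_n$, I would combine Theorem~\ref{dsfugewahdfawe} with the trivial reduction $s^{\oplus_n}_\alpha \leq_{sW} (s_\alpha)^n$, which uses the identity as pre-processing and XORs the $n$ output bits in post-processing. Specializing Theorem~\ref{dsfugewahdfawe} to all thresholds equal to $\alpha$ gives $s^{\oplus_{n+1}}_\alpha \nleq_W (s_\alpha)^n$, so by transitivity of $\leq_W$ any hypothetical reduction $s^{\oplus_{n+1}}_\alpha \leq_W s^{\oplus_n}_\alpha$ would yield $s^{\oplus_{n+1}}_\alpha \leq_W (s_\alpha)^n$, a contradiction. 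Since both halves are direct specializations of already-proven results, I do not expect any genuine obstacle here; all the heavy lifting has been carried out in Theorems~\ref{fjhdshjfhjasjh} and~\ref{dsfugewahdfawe}.
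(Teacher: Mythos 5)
Your proposal is correct and matches the paper exactly: the paper also obtains this corollary by specializing Theorem~\ref{fjhdshjfhjasjh} to identical thresholds and reading off the $\srbn$ shorthand, and your unpacking of the two halves (appending a dummy coordinate for the easy reduction, and routing the non-reduction through Theorem~\ref{dsfugewahdfawe} via $s^{\oplus_n}_\alpha \leq_{sW} (s_\alpha)^n$) is precisely how the paper justifies Theorem~\ref{fjhdshjfhjasjh} itself.
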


The remainder of this section will be devoted to the proof of Theorem~\ref{maintheorem}. First we show a key lemma.

\begin{Lemma}\label{keylemma}
Let $F$ be a truth-table and $v\in 2^{n(F)}$ such that $t(v)<n(F)$. Assume that,
\begin{itemize}
	\item for every $w,w'\in 2^{n(F)}$ with $ w,w'\supseteq v$ and $t(w)=t(w')>t(v)$, we have $F(w)=F(w')$,
	\item and for every $w \in 2^{n(F)}$ with $ w\supseteq v$, if $t(w)=t(v)+1$ then ${F(w)=F(v)}$.
\end{itemize}
Then $F\leq_{sB} F'$, where $F'$ is defined for every $u\in 2^{n(F)}$ via	
\[F'(u)=\begin{cases}
1-F(u) & \text{if } u=v,\\
F(u) & \text{otherwise.} \\
\end{cases}\]
\end{Lemma}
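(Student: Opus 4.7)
Plan: I will exhibit a pre-processing $\Phi\colon(2^\omega)^n\to(2^\omega)^n$ with the identity as post-processing (justified by a straightforward generalisation of Fact~\ref{fact:boundary}), such that $F(s_\alpha(x_1),\dots,s_\alpha(x_n))=F'(s_\alpha(\Phi_1(x)),\dots,s_\alpha(\Phi_n(x)))$ for every input.

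Since $F$ and $F'$ agree except at $v$, the identity $\Phi=\mathrm{id}$ works whenever $s_\alpha(x)\neq v$, so only inputs with $s_\alpha(x)=v$ need special treatment. The second hypothesis suggests the fix: pick any $j^*$ with $v(j^*)=0$ (which exists since $t(v)<n$) and arrange that on such inputs $s_\alpha(\Phi(x))=v+e_{j^*}$, which gives $F'(v+e_{j^*})=F(v+e_{j^*})=F(v)=F(s_\alpha(x))$ by assumption~2. I would set $y_j=x_j$ for $j\neq j^*$ and build $y_{j^*}$ online: as long as no lex inequality $x_k\leq_{lex}x_l$ with $v(k)=1$ and $v(l)=0$ has been observed at any finite stage (which would certify $s_\alpha(x)\neq v$), output bits of $y_{j^*}$ following the monotone template $y_{j^*}=\max\bigl(x_{j^*},\min_{v(k)=1}x_k\bigr)$, whose effect is to ensure $s_\alpha(y_{j^*})=1$ whenever $u\supseteq v$; as soon as a witness appears, pivot and commit the remaining bits so that $s_\alpha(y_{j^*})=s_\alpha(x_{j^*})$ in the limit, giving the identity branch.

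The verification splits by the value $u=s_\alpha(x)$: if $u=v$ the uncertain branch produces $u'=v+e_{j^*}$ and assumption~2 finishes; if a witness is observed then $u\neq v$ and $u'=u$ by construction; and if $u\supsetneq v$ but no witness ever appears, assumption~1 must close the gap. This last case is the main obstacle I anticipate, because the naive template bumps the $j^*$-coordinate even when $u\supsetneq v$, raising the level of $u'$ by one while assumption~1 only gives level-constancy within each level above $v$, not across levels. Overcoming it will probably require a refined uncertain-branch template that also modifies other $y_l$ with $v(l)=0$ so that $t(u')=t(u)$ whenever $u\supseteq v$; rigorously matching the bit-by-bit behaviour of the online switch to such a template is the technical heart of the proof.
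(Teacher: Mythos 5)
You correctly identify the right framework (identity post-processing, leave the $T$-coordinates $\{i:v(i)=1\}$ alone, modify the input only in the ``uncertain'' branch) and you correctly diagnose the obstacle: with a \emph{fixed} coordinate $j^*$ and template $y_{j^*}=\max\bigl(x_{j^*},\min_{v(k)=1}x_k\bigr)$, an input with $s_\alpha(x)\supsetneq v$ but $x_{j^*}<\alpha$ can avoid producing a witness (take $v=100$, $j^*=2$, and $x_1>x_3\geq\alpha>x_2$: no $x_k\leq_{lex}x_l$ with $v(k)=1,v(l)=0$ ever appears), and then your template sends $u=101$ to $u'=111$, changing the level. Assumption~1 is silent across levels, so $F'(111)=F(111)$ may disagree with $F(101)=F(100)$, and the reduction breaks. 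This is a genuine counterexample to your scheme, not just an ``anticipated difficulty''.

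The gap is that you stop there: you state that the fix ``will probably require a refined uncertain-branch template that also modifies other $y_l$'' and call it ``the technical heart of the proof'', but you do not supply it. That missing step is precisely the content of the lemma. The paper resolves it by \emph{not} fixing $j^*$ in advance: when $\max\{x_i:v(i)=0\}<\min\{x_i:v(i)=1\}$ first appears to hold, it picks $i^*$ from among the indices in $\{i:v(i)=0\}$ whose input currently looks \emph{maximal}, sets $y_{i^*}$ to copy $\min\{x_i:v(i)=1\}$, and redistributes the remaining tied coordinates among the smaller values. The crucial consequence (which your fixed-$j^*$ template cannot reproduce) is that when $v\subset s_\alpha(x)$, both the element removed ($\max\{x_i:v(i)=0\}\geq\alpha$) and the element substituted ($\min\{x_i:v(i)=1\}\geq\alpha$) lie on the same side of $\alpha$, so $t(s_\alpha(y))=t(s_\alpha(x))$ and assumption~1 applies. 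Until you produce a concrete online construction with that $t$-preservation property and verify it against all four cases ($u=v$; $t(u)=t(v)$, $u\neq v$; $t(u)<t(v)$; $t(u)>t(v)$ with $v\subset u$ or not), the proof is incomplete.
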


Before we give the full proof, we give an informal explanation of the ideas involved for the case of a $4$-dimensional truth-table $F$ and $v=1100$.
In~ $F'$, the output of the truth-table at~$v$ will be flipped; however we want to still have $F\leq_{sB} F'$. We achieve this by reducing the case~$v$ to another case $v'$ which in $F'$ still exhibits the same behaviour as $v$ did in $F$.
All other cases should be reduced to themselves.
Thus, we need to describe a pre-processing that operates on input quadruples $(x_1,x_2,x_3,x_4)$ in such way that while it may change the concrete values of the $x_i$'s and in some cases even their position relative to $\alpha$, the final output after applying the truth-table remains the same.
Only in the case that $x_1$ and $x_2$ are at least $\alpha$ while $x_3$ and $x_4$ are strictly to the left of it, do we actually want to change the $x_i$'s in such a way that their position relative to $\alpha$ changes so that we reduce to the different case $v'$.

One criterion that we apply will be to check whether \[\max\{x_3,x_4\}<\min\{x_1,x_2\}.\tag{$\dagger$}\]
Clearly,  $(\dagger)$ is equivalent to being in a situation that is comparable with $v$ by relation~$\subseteq$ defined above; thus in particular it is a necessary but insufficient condition for being in case $v$.

Given some input $(x_1,x_2,x_3,x_4)$, we keep inspecting longer and longer initial segments of all input components. Initially they might all look the same. But note that, if an input separates from the others by exhibiting an initial segment that shows it to be smaller than the others, then the first input that does this must indeed be the smallest of the four inputs. Similarly, if one of the inputs veers upwards first, then it must be the largest of the four input components. Of course it is also possible that two inputs separate from the two others at the same time, looking the same for a while and potentially splitting again later.

Initially, we just output $\Phi(x_1,x_2,x_3,x_4)=(x_1,x_2,x_3,x_4)$ with all output component initial segments looking identical. After a while, the inputs might start separating from each other, however as long as $(\dagger)$ does not hold, we continue outputting the inputs unmodified. For $x_1$ and $x_2$ we will continue like this forever; that is, $\Phi$ will unconditionally leave its first two input components unmodified. As far as $x_3$ and $x_4$ are concerned, if at some point $(\dagger)$ turns true, then there are two cases:

\begin{itemize}
	\item[(i)] Either, $\min\{x_3,x_4\}$ looks strictly smaller than $\max\{x_3,x_4\}$ which in turn looks strictly smaller than $\min\{x_1,x_2\}$. Then the component that looks like $\min\{x_3,x_4\}$ will continue outputting $\min\{x_3,x_4\}$ forever (which means this input component remains unmodified by $\Phi$) and the other component starts outputting $\min\{x_1,x_2\}$; notice that this is possible because this latter component can only now begin to separate from $\min\{x_1,x_2\}$; otherwise $(\dagger)$ would have become true earlier.
	\item[(ii)] Or, $x_3$ and $x_4$ keep looking the same but now start looking strictly smaller than $\min\{x_1,x_2\}$. In this case, we arbitrarily let $x_3$ copy $\min\{x_1,x_2\}$ and leave $x_4$ unmodified. Again note that this is possible, because in this case both $x_3$ and $x_4$ have only now separated from $\min\{x_1,x_2\}$.
\end{itemize}
This completes the description of the pre-processing; let us consider the possible outcomes:
\begin{itemize}
	\item Should we be in the case $(s_\alpha(x_1),s_\alpha(x_2),s_\alpha(x_3),s_\alpha(x_4))=v$, then $x_1$ and $x_2$ were to the right of $\alpha$ and they remain so. One of $x_3$ and $x_4$, who both were to the left of $\alpha$, is replaced by $\min\{x_1,x_2\}$, which lies to the right of $\alpha$; the other is unchanged and stays to the left of $\alpha$. If we were in Case~(ii) above, the positions of the third and fourth components in the output quadruple might also be swapped. But in any case, from the truth values $v=1100$, we transition to a $v'\in \{1110, 1101\}$. Then, by the second assumption in Lemma~\ref{keylemma}, we have $F'(v')=F(v')=F(v)$, as required.
	
	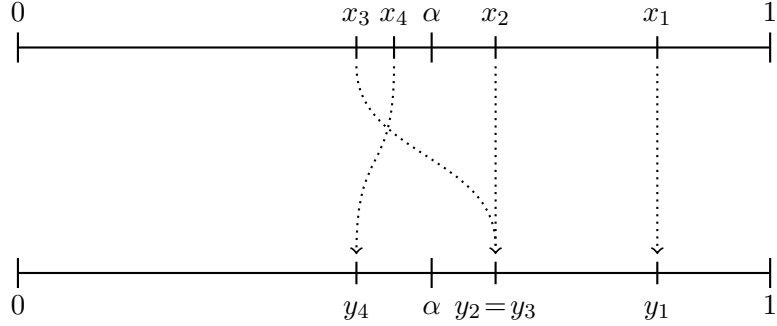
\begin{figure}[htb]
		\begin{center}
			\begin{tikzpicture}[scale=.5,auto=left,every node/.style={fill=black!15},y=-2cm]
				\draw[thick] (0,0) -- (20,0);
				
				\draw[thick] (0,-0.2) -- (0,0.2);
				\node[fill=none] at (0,-0.45) {\strut{$0$}};
				
				\draw[thick] (20,-0.2) -- (20,0.2);
				\node[fill=none] at (20,-0.45) {\strut{$1$}};
				
				\draw[thick] (11,-0.2) -- (11,0.2);
				\node[fill=none] at (11,-0.45) {\strut{$\alpha$}};
				
				\draw[thick] (17,-0.15) -- (17,0.15);
				\node[fill=none] at (17,-0.45) {\strut{$x_1$}};
				
				\draw[thick] (12.7,-0.15) -- (12.7,0.15);
				\node[fill=none] at (12.7,-0.45) {\strut{$x_2$}};
				
				\draw[thick] (9,-0.15) -- (9,0.15);
				\node[fill=none] at (9,-0.45) {\strut{$x_3$}};
				
				\draw[thick] (10,-0.15) -- (10,0.15);
				\node[fill=none] at (10,-0.45) {\strut{$x_4$}};

				\draw[thick] (0,3) -- (20,3);
				
				\draw[thick] (0,3.2) -- (0,2.8);
				\node[fill=none] at (0,3.45) {\strut{$0$}};
				
				\draw[thick] (20,3.2) -- (20,2.8);
				\node[fill=none] at (20,3.45) {\strut{$1$}};
				
				\draw[thick] (11,3.2) -- (11,2.8);
				\node[fill=none] at (11,3.45) {\strut{$\alpha$}};
				
				\draw[thick] (17,3.15) -- (17,2.85);
				\node[fill=none] at (17,3.45) {\strut{$y_1$}};
				
				\draw[thick] (12.7,3.15) -- (12.7,2.85);
				\node[fill=none] at (12.7,3.45) {\strut{$y_2\!=\!y_3$}};
				
				\draw[thick] (9,3.15) -- (9,2.85);
				\node[fill=none] at (9,3.45) {\strut{$y_4$}};

				\draw[thick,dotted,->] (17,0.25) -- (17,2.75);
				\draw[thick,dotted,->] (12.7,0.25) -- (12.7,2.75);
				\draw[thick,dotted] (9,0.25) to[in=90,out=-90,looseness=1] (12.7,2.63);
				\draw[thick,dotted,->] (10,0.25) to[in=90,out=-90,looseness=1.5] (9,2.75);
				
			\end{tikzpicture}
			\caption{The pre-processing   operating on an input $(x_1,x_2,x_3,x_4)$ with $(s_\alpha(x_1),s_\alpha(x_2),s_\alpha(x_3),s_\alpha(x_4))=v$ to produce output $(y_1,y_2,y_3,y_4)$ while Case~(ii) applies. After the pre-processing one more input component lies to the right of $\alpha$ than did before.}
			\label{fig:diagramdfgdfg}
		\end{center}
	\end{figure}
	
	\item If 
	\[t(s_\alpha(x_1),s_\alpha(x_2),s_\alpha(x_3),s_\alpha(x_4))=t(v)\]
	but 
	\[(s_\alpha(x_1),s_\alpha(x_2),s_\alpha(x_3),s_\alpha(x_4))\neq v\]
	then clearly condition $(\dagger)$ can never be triggered, and thus	
	\[\Phi(x_1,x_2,x_3,x_4)=(x_1,x_2,x_3,x_4).\]
	\item If $t(s_\alpha(x_1),s_\alpha(x_2),s_\alpha(x_3),s_\alpha(x_4))<t(v)$, then even if $(\dagger)$ is triggered, replacing one of~$x_3$ and $x_4$, who both were to the left of $\alpha$, by $\min\{x_1,x_2\}$, which in this case also lies to the left of $\alpha$, does not change the inputs to the truth-table. Again, if we were in Case~(ii), some of the output components might have switched places; but, as this can only occur between components to the left of $\alpha$, it has no impact.
	\item If $t(s_\alpha(x_1),s_\alpha(x_2),s_\alpha(x_3),s_\alpha(x_4))>t(v)$
	there are two subcases:
	\begin{itemize}
		\item If $v \not\subset (s_\alpha(x_1),s_\alpha(x_2),s_\alpha(x_3),s_\alpha(x_4))$ then condition $(\dagger)$ cannot get triggered.
		\item Assume otherwise. Then $x_1$, $x_2$ and $\max\{x_3,x_4\}$ must all three be at least $\alpha$. Thus, replacing $\max\{x_3,x_4\}$ by $\min\{x_1,x_2\}$ does not change any truth values. Note, however, that if we were in Case~(ii) above, then we made an arbitrary choice which component copies $\min\{x_1,x_2\}$ and which one $\min\{x_3,x_4\}$; thus the order of the truth values of the third and fourth component might be interchanged. But by the first condition in the theorem that does not change the output of the truth table.
	\end{itemize}
\end{itemize}
This completes our example; we can now give the proof in full generality.
\begin{proof}
Write $T$ for $\{i\colon v(i)=1\}$ and $F$ for 	$\{i\colon v(i)=0\}$. 

On input~$(x_1,\dots,x_n)$, the pre-processing~$\Phi$ proceeds as follows to produce output~$(y_1,\dots,y_n)$: All $x_i$ for $i \in T$ are always copied into $y_i$ unmodified. For all other $x_i$ with $i \in F$, we initially also copy them unmodified into $y_i$, while waiting for condition
	\[\max\{x_i\colon i \in F\}<\min\{x_i\colon i \in T\}.\tag{$\ddagger$}\]
to start looking true. By this we mean that we inspect longer and longer initial segments of all the $x_i$ and check whether these initial segments already witness $(\ddagger)$.

As soon as  $(\ddagger)$ first looks true, we let $U$ be the set of $i \in F$ such that $x_i$ currently looks like $\max\{x_i\colon i\in F\}$. It is clear that $U$ is not empty. We arbitrarily pick some $i \in U$ and call it~$i^\ast$. Let $L$ denote $F \setminus U$.

Thus $x_{i^\ast}$ has just now started to look different from ${\min\{x_i\colon i \in T\}}$, so that we can let $y_{i^\ast}$ copy $\min\{x_i\colon i \in T\}$. If $\ell := |U \setminus \{i^\ast\}|$ is non-zero, then let all~$y_i$ for $i \in U \setminus \{i^\ast\}$ from now on copy the $\ell$ smallest numbers in $\{x_i\colon i \in U\}$ (a~set containing $\ell+1$ numbers). This is still possible, because up to this point all $x_i$ with $i \in U$ look the same.

For all $i \in L$, we continue to copy the input component $x_i$ into~$y_i$ unchanged. This completes the construction.

\medskip

We verify that this proves the theorem by distinguishing these cases:
\begin{itemize}
	\item In case $(s_\alpha(x_1),\dots,s_\alpha(x_n))=v$ it must be clearly be the case that $(\ddagger)$ will eventually look true. All components $x_i$ with $i \in T$ were at least $\alpha$ and since
	for these $i$ we have $x_i=y_i$ the same holds after applying $\Phi$. The component $x_{i^\ast}$ was to the left of $\alpha$ but is in the output replaced by $y_{i^\ast}=\min\{x_i\colon i \in T\}$, which is at least $\alpha$. All other $y_i$ are equal to the $|F|-1$ smallest numbers in $\{x_i\colon i \in F\}$, and are thus all to the left of $\alpha$ (here, some $i$-th components with $i \in U \setminus \{i^\ast\}$ may have been swapped with each other). 
	
	Thus,   $v':=(s_\alpha(y_1),\dots,s_\alpha(y_n))$ is the same as $v$, except for one additional $1$ in the $i^\ast$-th component. By the second condition in the theorem, $F'(v')=F(v')=F(v)$.
	
	\item If $t(s_\alpha(x_1),\dots,s_\alpha(x_n))=t(v)$ but $(s_\alpha(x_1),\dots,s_\alpha(x_n))\neq v$ then clearly condition $(\ddagger)$ can never start looking true, and thus	
	\[(y_1,\dots,y_n)=(x_1,\dots,x_n).\]
	
	\item In case that $t(s_\alpha(x_1),\dots,s_\alpha(x_n))<t(v)$, even if $(\ddagger)$ is triggered,
	then replacing $x_{i^\ast}$, which was to the left of $\alpha$, by $y_{i^\ast}=\min\{x_i\colon i \in T\}$, which in this case also lies to the left of $\alpha$, results in \[(s_\alpha(x_1),\dots,s_\alpha(x_n))=(s_\alpha(y_1),\dots,s_\alpha(y_n));\]
	here again, some $i$-th components with $i \in U \setminus \{i^\ast\}$ may have been swapped with each other.
		
	\item If $t(s_\alpha(x_1),\dots,s_\alpha(x_n))>t(v)$
	there are two subcases:
	\begin{itemize}
		\item If $v \not\subset (s_\alpha(x_1),\dots,s_\alpha(x_n))$ then condition $(\ddagger)$ can never start looking true, and thus	
		$(y_1,\dots,y_n)=(x_1,\dots,x_n)$.
		\item Assume otherwise, that is $v \subset (s_\alpha(x_1),\dots,s_\alpha(x_n))$.
		As before, some $i$-th components with $i \in U \setminus \{i^\ast\}$ may have been swapped with each other, and for those~$i$'s we may have ${s_\alpha(x_i)\neq s_\alpha(y_i)}$.
		But for all $i \in T$ we have $x_i=y_i$ and thus also 
		\[v \subset (s_\alpha(y_1),\dots,s_\alpha(y_n)).\]
		And since all numbers in $\{x_i\colon i \in T\} \cup \{\max\{x_i\colon i\in F\}\}$ must be at least $\alpha$, when during the construction $\max\{x_i\colon i\in F\}$ is replaced by ${y_{i^\ast}=\min\{x_i\colon i \in T\}}$ this ensures that
		\[t(s_\alpha(x_1),\dots,s_\alpha(x_n))=t(s_\alpha(y_1),\dots,s_\alpha(y_n)).\]
		By the first condition in the theorem, those two properties are enough to keep the output of the truth-table constant.\qedhere
	\end{itemize}
\end{itemize}
\end{proof}
Note that in the lemma above, we have $l(F)\leq l(F')$, as expected.

\begin{figure}[h]
	\begin{center}
		\begin{tikzpicture}[scale=.5,auto=left,every node/.style={fill=black!15},y=-2cm]
			\input{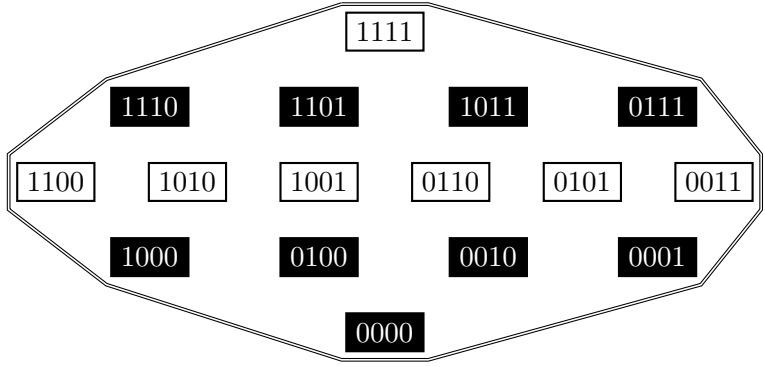}
		\end{tikzpicture}
		\caption{$H_{n,w}$ for $n=4$ and $w=11010$.}
		\label{fig:diagrertertertam}
	\end{center}
\end{figure}

Next, for each $n$ and each $l\leq n$ we identify several special Boolean functions~$F$ with $n(F)=n$ and $l(F)=l$.

Let $w\in 2^{n+1}$. We define $H_{n,w}$ to be the Boolean function with $n$ inputs and such that for every $v\in 2^n$, $H_{n,w}(v)=w(t(v))$. That is, whether $H_{n,w}$ outputs true only depends on the \textit{number} of true inputs; and which of those numbers lead to true or false is specified by the bits in $w$. It is immediate that for $w$ with $l(w)=l$ we have $l(H_{n,w})=l$.

Also define $K_{n,l}$
as $H_{n,w}$ for the word \[w= 0^{n-l} \; 0 \; 1 \; 0 \; 1 \;\dots \; (l-1 \,\mathrm{mod}\, 2) \; (l \,\mathrm{mod}\, 2).\]
That is, $K_{n,l}$ is homogeneous on every level, where on the $n-l$ lowest levels it always outputs false, while on the remaining  $l+1$ levels its output inverts every time we go up one level.

\begin{figure}[htb]
	\begin{center}
		\makebox[\textwidth][c]{
			\begin{tikzpicture}[scale=.5,auto=left,every node/.style={fill=black!15},y=-2cm]
				\input{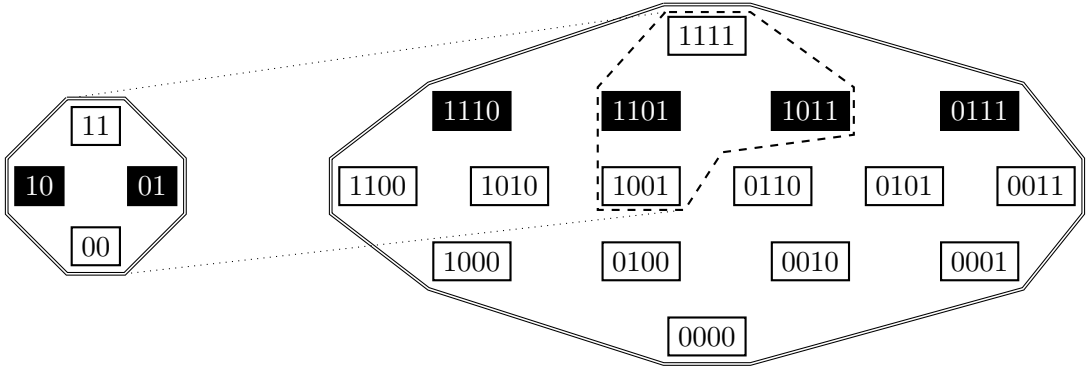}
			\end{tikzpicture}
		}
		\caption{$\oplus_l$ and $K_{n,l}$ for $l=2$ and $n=4$. One possible embedding of $\oplus_l$ into the highest three levels of $K_{n,l}$ is indicated. It is also easy to see that $l(\oplus_2)=l(K_{4,2})$.}
		\label{fig:diagrwefererteam}
	\end{center}
\end{figure}
\begin{Lemma}\label{lem:wtow}
Fix $k_0>0$ and let $w,w'\in 2^n$ and suppose that $w(k)=w'(k)$ for all $k> k_0$ and $w(k)=w'(k-1)$ for every $0<k\leq k_0$ and $w(0)=w'(0)$. Then 
\[s^{H_{n,w}}_\alpha\leq_{sW}  s^{H_{n,w'}}_\alpha\!\!\!\!\!\!\!\!\!.\]
\end{Lemma}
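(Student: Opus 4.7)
The plan is to produce a strong Weihrauch reduction from $s^{H_{n,w}}_\alpha$ to $s^{H_{n,w'}}_\alpha$ using the identity as post-processing and a ``sort and replace'' pre-processing $\Phi$. The key structural observation is that both truth-tables are homogeneous, so for any input $(x_1,\dots,x_n) \in (2^\omega)^n$ we have $s^{H_{n,w}}_\alpha(x_1,\dots,x_n) = w(t)$, where $t = |\{i : x_i \geq_{lex} \alpha\}|$, and analogously for $H_{n,w'}$ with some output count $t'$. Hence it suffices to design $\Phi$ so that for every $t \in \{0,\dots,n\}$ the count $t' = |\{i : y_i \geq_{lex} \alpha\}|$ of $(y_1,\dots,y_n) = \Phi(x_1,\dots,x_n)$ satisfies $w(t) = w'(t')$. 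Unpacking the three hypotheses relating $w$ and $w'$, this boils down to arranging $t' = t - 1$ when $1 \leq t \leq k_0$ and $t' = t$ otherwise.

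To achieve this I will take $\Phi$ as follows. Let $x_{(1)} \leq_{lex} x_{(2)} \leq_{lex} \dots \leq_{lex} x_{(n)}$ denote the lexicographically sorted input and define
\[
\Phi(x_1,\dots,x_n) \;:=\; \bigl(x_{(1)},\, x_{(2)},\, \dots,\, x_{(n-1)},\, x_{(n-k_0)}\bigr),
\]
with the convention $x_{(0)} := 0^\omega$ in the edge case $k_0 = n$. Informally, we discard the maximum $x_{(n)}$ and place a copy of the $(k_0{+}1)$-th largest input into its slot; the reordering is harmless because $H_{n,w'}$ is homogeneous.

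Verifying the count relation is then a short case analysis on $t$. If $t = 0$, every output coordinate is $<\alpha$, so $t' = 0$. If $1 \leq t \leq k_0$, the $(k_0{+}1)$-th largest input $x_{(n-k_0)}$ is still $<\alpha$, so the final output coordinate contributes $0$; the first $n-1$ output coordinates $x_{(1)},\dots,x_{(n-1)}$ comprise exactly the $t-1$ values $\geq \alpha$ other than the maximum, so $t' = t - 1$. Finally, for $t \geq k_0 + 1$, the entry $x_{(n-k_0)}$ is now $\geq \alpha$, the first $n-1$ output coordinates contribute $t-1$ to the count, and the last coordinate contributes $1$, yielding $t' = t$.

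The only remaining point, and the main technicality, is to confirm that $\Phi$ is a Turing functional. This reduces to computability of lexicographic order statistics on $(2^\omega)^n$, which holds because pairwise lex comparison produces its outcome in the limit from the input bits, and whenever two input sequences happen to agree entirely the resulting ambiguity in the sorting permutation is harmless since the two candidate order statistics take the same value on every bit. Pairing $\Phi$ with the identity post-processing then witnesses $s^{H_{n,w}}_\alpha \leq_{sW} s^{H_{n,w'}}_\alpha$, as required.
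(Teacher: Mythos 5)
Your proof is correct and uses essentially the same construction as the paper: drop the largest input, duplicate the $(k_0{+}1)$-th largest (the paper writes the duplicated value next to its original, you append it at the end, which is immaterial by homogeneity), and verify the count relation by the same three-way case split on $t$. The only additions beyond the paper's exposition are your explicit check that lexicographic order statistics are computable and your handling of the $k_0 = n$ edge case, neither of which changes the substance of the argument.
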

\begin{proof}
For an input $(x_1,\dots,x_n)$, write $\mu_1$ for the smallest number among the~$x_i$'s, $\mu_2$ for the second-smallest, etc. Then let the pre-processing output
\[\textstyle(y_1,\dots,y_n):=(\mu_1,\ldots,\mu_{n-k_0-1},\mu_{n-k_0},\mu_{n-k_0},\mu_{n-k_0+1},\ldots,\mu_{n-1}).\]
Write $k$ for $t(s_\alpha(x_1),\dots,s_\alpha(x_n))$ and $k'$ for $t(s_\alpha(y_1),\dots,s_\alpha(y_n))$. It is easy to check that if $k\geq k_0+1$ then $k'=k$; that if $0<k\leq k_0$ then $k'=k-1$; and finally, that if $k=0$ then~$k'=0$.
\end{proof}
Again, note that in the above, $l(H_{n,w})\leq l(H_{n,w'})$.

\begin{Lemma}\label{lem:wtow1}
Fix $k_0>0$ and let $w,w'\in 2^{n+1}$ and suppose that ${w(k)=w'(k)}$ for all~${k> k_0}$ and ${w(k)=w'(k+1)}$ for every~${k\leq k_0}$. Then 
\[s^{H_{n,w}}_\alpha\leq_W  s^{H_{n,w'}}_\alpha\!\!\!\!\!\!\!\!\!.\]
\end{Lemma}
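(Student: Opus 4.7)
The plan is to exhibit an explicit strong Weihrauch reduction (which a fortiori yields $\leq_W$) by taking the post-processing to be the identity and designing the pre-processing $\Phi$ to achieve the correct level shift. Specifically, given input $(x_1,\dots,x_n)$, sort the inputs lexicographically to obtain $\mu_1 \leq_{lex} \cdots \leq_{lex} \mu_n$, drop the $(k_0+1)$-th largest input $\mu_{n-k_0}$, and prepend the constant string $1^\omega$:
\[\Phi(x_1,\dots,x_n) := (1^\omega, \mu_1, \dots, \mu_{n-k_0-1}, \mu_{n-k_0+1}, \dots, \mu_n).\]
This is closely analogous to the pre-processing used in Lemma~\ref{lem:wtow}, but the crucial new ingredient is the constant $1^\omega$ supplying an always-above-$\alpha$ component; this is what lets us shift an input of level~$0$ up to an output of level~$1$, something impossible using sorting and duplication alone.

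The key step is a direct level calculation. Writing $k = t(s_\alpha(x_1),\dots,s_\alpha(x_n))$ for the input level, we have $s_\alpha(\mu_i) = 1$ iff $i \geq n-k+1$, and in particular $s_\alpha(\mu_{n-k_0}) = 1$ iff $k \geq k_0+1$. Since $s_\alpha(1^\omega) = 1$ unconditionally, the level of the pre-processed tuple is
\[k' \;=\; 1 + k - s_\alpha(\mu_{n-k_0}) \;=\; \begin{cases} k+1, & k \leq k_0,\\ k, & k > k_0. \end{cases}\]
Combining this with the hypotheses $w(k) = w'(k+1)$ for $k \leq k_0$ and $w(k) = w'(k)$ for $k > k_0$, the oracle answer $H_{n,w'}(\Phi(\vec x)) = w'(k')$ is exactly $w(k) = s^{H_{n,w}}_\alpha(\vec x)$, so identity post-processing completes the reduction.

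The only part that needs justification beyond this is that $\Phi$ is computable, but this reduces to the standard facts that the constant $1^\omega$ is computable and that lexicographic sorting of finitely many elements of Cantor space is computable (since $\min$ and $\max$ of two inputs can be produced bit-by-bit). The only conceptual idea — injecting $1^\omega$ to raise the floor from level~$0$ to level~$1$, while dropping $\mu_{n-k_0}$ to cancel this raise exactly on levels strictly above $k_0$ — is the only non-routine step, and no further obstacle arises. I note in passing that, since the post-processing we use is the identity, this argument in fact establishes the stronger statement $s^{H_{n,w}}_\alpha \leq_{sW} s^{H_{n,w'}}_\alpha$.
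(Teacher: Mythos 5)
Your proof is correct and takes essentially the same approach as the paper: sort the inputs, drop $\mu_{n-k_0}$, and pad with a constant $1^\omega$ component (the paper appends the $1^\omega$ at the end rather than prepending it, which is immaterial since $H_{n,w'}$ is invariant under permutation of its arguments). Your closing observation that identity post-processing in fact yields the stronger conclusion $s^{H_{n,w}}_\alpha\leq_{sW} s^{H_{n,w'}}_\alpha$ is also valid, even though the lemma statement and the paper's proof only claim $\leq_W$.
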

\begin{proof}
For an input $(x_1,\dots,x_n)$ let the pre-processing output 
\[\textstyle(y_1,\dots,y_n):=(\mu_1,\ldots,\mu_{n-k_0-2},\mu_{n-k_0-1},\mu_{n-k_0+1},\mu_{n-k_0+2},\ldots,\mu_{n},1^\omega).\]
Again write $k$ for $t(s_\alpha(x_1),\dots,s_\alpha(x_n))$ and $k'$ for $t(s_\alpha(y_1),\dots,s_\alpha(y_n))$. It is easy to check that if $k\geq k_0+1$ then $k'=k$, and that if $0<k\leq k_0$ then $k'=k+1$.
\end{proof}

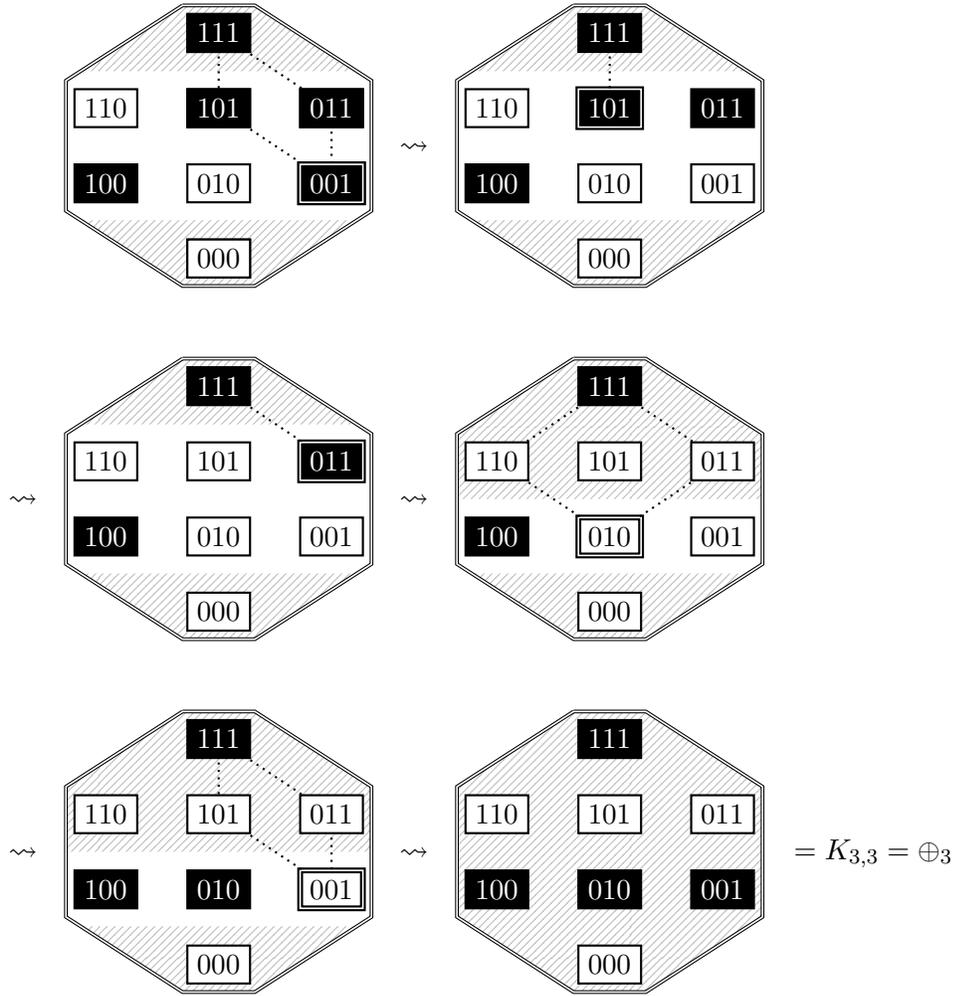
\begin{figure}[ht]
	\begin{center}
		\makebox[\textwidth][c]{

\begin{tabular}{ccccccc}

&$\vcenter{\hbox{\begin{tikzpicture}[scale=.5,auto=left,every node/.style={fill=black!15},y=-2cm]
	\tikzstyle{truth}=[color=white,fill=black,draw=black,thick]
	\tikzstyle{lie}=[color=black,fill=white,draw=black,thick]

\fill[pattern color=lightgray, pattern=north east lines]  ($(TFF.south west) !.125! (FFF.south west) + (-0.2,0.1)$) -- ($(FFF.south west)+ (-0.1,0.1)$) -- ($(FFF.south east)+ (0.1,0.1)$) -- ($(FFF.south east) !.875! (FFT.south east)+ (0.2,0.1)$);
	
		\fill[pattern color=lightgray, pattern=north east lines] ($(TTT.north west) + (-0.1,-0.1)$) -- ($(TTT.north west) !.875! (TTF.north west)  + (-0.15,-0.1)$)  -- ($(FTT.north east) !.125! (TTT.north east)  + (0.15,-0.1)$) -- ($(TTT.north east)+ (0.1,-0.1)$) --  ($(TTT.north west)+ (-0.1,-0.1)$);
	 
	\node[truth] (TTT) at (0,0) {$111$};
	
	\node[lie] (TTF) at (-3,1) {$110$};
	\node[truth] (TFT) at (0,1) {$101$};
	\node[truth] (FTT) at (3,1) {$011$};

	\node[truth] (TFF) at (-3,2) {$100$};
	\node[lie] (FTF) at (0,2) {$010$};
	\node[truth,double] (FFT) at (3,2) {$001$};
	
	\node[lie] (FFF) at (0,3) {$000$};
	
	\foreach \from/\to in {FFT/TFT,FFT/FTT,TFT/TTT,FTT/TTT}
	\draw [thick, dotted] (\from) -- (\to);  
	
	\draw[double] ($(TTT.north west) + (-0.1,-0.1)$) -- ($(TTF.north west)+ (-0.2,-0.1)$) -- ($(TFF.south west)+ (-0.2,0.1)$) -- ($(FFF.south west)+ (-0.1,0.1)$) -- ($(FFF.south east)+ (0.1,0.1)$) -- ($(FFT.south east)+ (0.2,0.1)$) -- ($(FTT.north east)+ (0.2,-0.1)$) -- ($(TTT.north east)+ (0.1,-0.1)$) --  ($(TTT.north west)+ (-0.1,-0.1)$);
	
	\node[truth] (TTT) at (0,0) {$111$};
	\node[truth] (TFT) at (0,1) {$101$};
	\node[truth] (FTT) at (3,1) {$011$};
	\node[truth,double] (FFT) at (3,2) {$001$};
	
	\node[lie] (FFF) at (0,3) {$000$};
\end{tikzpicture}}}$&

$\rightsquigarrow$

&$\vcenter{\hbox{\begin{tikzpicture}[scale=.5,auto=left,every node/.style={fill=black!15},y=-2cm]
			\tikzstyle{truth}=[color=white,fill=black,draw=black,thick]
			\tikzstyle{lie}=[color=black,fill=white,draw=black,thick] 

\fill[pattern color=lightgray, pattern=north east lines] ($(TTT.north west) + (-0.1,-0.1)$) -- ($(TTT.north west) !.875! (TTF.north west)  + (-0.15,-0.1)$)  -- ($(FTT.north east) !.125! (TTT.north east)  + (0.15,-0.1)$) -- ($(TTT.north east)+ (0.1,-0.1)$) --  ($(TTT.north west)+ (-0.1,-0.1)$);

\fill[pattern color=lightgray, pattern=north east lines]  ($(TFF.south west) !.125! (FFF.south west) + (-0.2,0.1)$) -- ($(FFF.south west)+ (-0.1,0.1)$) -- ($(FFF.south east)+ (0.1,0.1)$) -- ($(FFF.south east) !.875! (FFT.south east)+ (0.2,0.1)$);
			
\node[truth] (TTT) at (0,0) {$111$};

\node[lie] (TTF) at (-3,1) {$110$};
\node[truth,double] (TFT) at (0,1) {$101$};
\node[truth] (FTT) at (3,1) {$011$};

\node[truth] (TFF) at (-3,2) {$100$};
\node[lie] (FTF) at (0,2) {$010$};
\node[lie] (FFT) at (3,2) {$001$};

\node[lie] (FFF) at (0,3) {$000$};
			
			\foreach \from/\to in {TFT/TTT}
			\draw [thick, dotted] (\from) -- (\to);  
			
			\draw[double] ($(TTT.north west) + (-0.1,-0.1)$) -- ($(TTF.north west)+ (-0.2,-0.1)$) -- ($(TFF.south west)+ (-0.2,0.1)$) -- ($(FFF.south west)+ (-0.1,0.1)$) -- ($(FFF.south east)+ (0.1,0.1)$) -- ($(FFT.south east)+ (0.2,0.1)$) -- ($(FTT.north east)+ (0.2,-0.1)$) -- ($(TTT.north east)+ (0.1,-0.1)$) --  ($(TTT.north west)+ (-0.1,-0.1)$);
\end{tikzpicture}}}$&

\phantom{$\rightsquigarrow$}
\\\\\\

$\rightsquigarrow$

&$\vcenter{\hbox{\begin{tikzpicture}[scale=.5,auto=left,every node/.style={fill=black!15},y=-2cm]
			\tikzstyle{truth}=[color=white,fill=black,draw=black,thick]
			\tikzstyle{lie}=[color=black,fill=white,draw=black,thick] 
		
	\fill[pattern color=lightgray, pattern=north east lines]  ($(TFF.south west) !.125! (FFF.south west) + (-0.2,0.1)$) -- ($(FFF.south west)+ (-0.1,0.1)$) -- ($(FFF.south east)+ (0.1,0.1)$) -- ($(FFF.south east) !.875! (FFT.south east)+ (0.2,0.1)$);
			
\fill[pattern color=lightgray, pattern=north east lines] ($(TTT.north west) + (-0.1,-0.1)$) -- ($(TTT.north west) !.875! (TTF.north west)  + (-0.15,-0.1)$)  -- ($(FTT.north east) !.125! (TTT.north east)  + (0.15,-0.1)$) -- ($(TTT.north east)+ (0.1,-0.1)$) --  ($(TTT.north west)+ (-0.1,-0.1)$);
			
\node[truth] (TTT) at (0,0) {$111$};

\node[lie] (TTF) at (-3,1) {$110$};
\node[lie] (TFT) at (0,1) {$101$};
\node[truth,double] (FTT) at (3,1) {$011$};

\node[truth] (TFF) at (-3,2) {$100$};
\node[lie] (FTF) at (0,2) {$010$};
\node[lie] (FFT) at (3,2) {$001$};

\node[lie] (FFF) at (0,3) {$000$};
			
			\foreach \from/\to in {FTT/TTT}
			\draw [thick, dotted] (\from) -- (\to);  
			
			\draw[double] ($(TTT.north west) + (-0.1,-0.1)$) -- ($(TTF.north west)+ (-0.2,-0.1)$) -- ($(TFF.south west)+ (-0.2,0.1)$) -- ($(FFF.south west)+ (-0.1,0.1)$) -- ($(FFF.south east)+ (0.1,0.1)$) -- ($(FFT.south east)+ (0.2,0.1)$) -- ($(FTT.north east)+ (0.2,-0.1)$) -- ($(TTT.north east)+ (0.1,-0.1)$) --  ($(TTT.north west)+ (-0.1,-0.1)$);
\end{tikzpicture}}}$&

$\rightsquigarrow$

&$\vcenter{\hbox{\begin{tikzpicture}[scale=.5,auto=left,every node/.style={fill=black!15},y=-2cm]
			\tikzstyle{truth}=[color=white,fill=black,draw=black,thick]
			\tikzstyle{lie}=[color=black,fill=white,draw=black,thick]

\fill[pattern color=lightgray, pattern=north east lines] ($(TTT.north west) + (-0.1,-0.1)$) -- 
($(TTF.north west)+ (-0.2,-0.1)$) --
($(TTF.north west) !.5! (TFF.south west) + (-0.2,0)$)
-- ($(FFT.south east) !.5! (FTT.north east) + (0.2,0)$) 
-- ($(FTT.north east)+ (0.2,-0.1)$)
-- ($(TTT.north east)+ (0.1,-0.1)$) --  ($(TTT.north west)+ (-0.1,-0.1)$);
			
			\fill[pattern color=lightgray, pattern=north east lines]  ($(TFF.south west) !.125! (FFF.south west) + (-0.2,0.1)$) -- ($(FFF.south west)+ (-0.1,0.1)$) -- ($(FFF.south east)+ (0.1,0.1)$) -- ($(FFF.south east) !.875! (FFT.south east)+ (0.2,0.1)$);
\node[truth] (TTT) at (0,0) {$111$};

\node[lie] (TTF) at (-3,1) {$110$};
\node[lie] (TFT) at (0,1) {$101$};
\node[lie] (FTT) at (3,1) {$011$};

\node[truth] (TFF) at (-3,2) {$100$};
\node[lie,double] (FTF) at (0,2) {$010$};
\node[lie] (FFT) at (3,2) {$001$};

\node[lie] (FFF) at (0,3) {$000$};
			
			\foreach \from/\to in {FTF/TTF,FTF/FTT,TTF/TTT,FTT/TTT}
			\draw [thick, dotted] (\from) -- (\to);  
			
			\draw[double] ($(TTT.north west) + (-0.1,-0.1)$) -- ($(TTF.north west)+ (-0.2,-0.1)$) -- ($(TFF.south west)+ (-0.2,0.1)$) -- ($(FFF.south west)+ (-0.1,0.1)$) -- ($(FFF.south east)+ (0.1,0.1)$) -- ($(FFT.south east)+ (0.2,0.1)$) -- ($(FTT.north east)+ (0.2,-0.1)$) -- ($(TTT.north east)+ (0.1,-0.1)$) --  ($(TTT.north west)+ (-0.1,-0.1)$);
\end{tikzpicture}}}$

\\\\\\

$\rightsquigarrow$

&$\vcenter{\hbox{\begin{tikzpicture}[scale=.5,auto=left,every node/.style={fill=black!15},y=-2cm]
			\tikzstyle{truth}=[color=white,fill=black,draw=black,thick]
			\tikzstyle{lie}=[color=black,fill=white,draw=black,thick] 
			
	\fill[pattern color=lightgray, pattern=north east lines]  ($(TFF.south west) !.125! (FFF.south west) + (-0.2,0.1)$) -- ($(FFF.south west)+ (-0.1,0.1)$) -- ($(FFF.south east)+ (0.1,0.1)$) -- ($(FFF.south east) !.875! (FFT.south east)+ (0.2,0.1)$);
			
\fill[pattern color=lightgray, pattern=north east lines] ($(TTT.north west) + (-0.1,-0.1)$) -- 
($(TTF.north west)+ (-0.2,-0.1)$) --
($(TTF.north west) !.5! (TFF.south west) + (-0.2,0)$)
-- ($(FFT.south east) !.5! (FTT.north east) + (0.2,0)$) 
-- ($(FTT.north east)+ (0.2,-0.1)$)
-- ($(TTT.north east)+ (0.1,-0.1)$) --  ($(TTT.north west)+ (-0.1,-0.1)$);
			
			\node[truth] (TTT) at (0,0) {$111$};
			
			\node[lie] (TTF) at (-3,1) {$110$};
			\node[lie] (TFT) at (0,1) {$101$};
			\node[lie] (FTT) at (3,1) {$011$};

			\node[truth] (TFF) at (-3,2) {$100$};
			\node[truth] (FTF) at (0,2) {$010$};
			\node[lie,double] (FFT) at (3,2) {$001$};
			
			\node[lie] (FFF) at (0,3) {$000$};
			
			\foreach \from/\to in {FFT/TFT,FFT/FTT,TFT/TTT,FTT/TTT}
			\draw [thick, dotted] (\from) -- (\to);  
			
			\draw[double] ($(TTT.north west) + (-0.1,-0.1)$) -- ($(TTF.north west)+ (-0.2,-0.1)$) -- ($(TFF.south west)+ (-0.2,0.1)$) -- ($(FFF.south west)+ (-0.1,0.1)$) -- ($(FFF.south east)+ (0.1,0.1)$) -- ($(FFT.south east)+ (0.2,0.1)$) -- ($(FTT.north east)+ (0.2,-0.1)$) -- ($(TTT.north east)+ (0.1,-0.1)$) --  ($(TTT.north west)+ (-0.1,-0.1)$);

\end{tikzpicture}}}$&

$\rightsquigarrow$

&$\vcenter{\hbox{\begin{tikzpicture}[scale=.5,auto=left,every node/.style={fill=black!15},y=-2cm]
			\tikzstyle{truth}=[color=white,fill=black,draw=black,thick]
			\tikzstyle{lie}=[color=black,fill=white,draw=black,thick] 
	
	\fill[pattern color=lightgray, pattern=north east lines]  ($(TTT.north west) + (-0.1,-0.1)$) -- ($(TTF.north west)+ (-0.2,-0.1)$) -- ($(TFF.south west)+ (-0.2,0.1)$) -- ($(FFF.south west)+ (-0.1,0.1)$) -- ($(FFF.south east)+ (0.1,0.1)$) -- ($(FFT.south east)+ (0.2,0.1)$) -- ($(FTT.north east)+ (0.2,-0.1)$) -- ($(TTT.north east)+ (0.1,-0.1)$) --  ($(TTT.north west)+ (-0.1,-0.1)$);
			
			\node[truth] (TTT) at (0,0) {$111$};
			
			\node[lie] (TTF) at (-3,1) {$110$};
			\node[lie] (TFT) at (0,1) {$101$};
			\node[lie] (FTT) at (3,1) {$011$};

			\node[truth] (TFF) at (-3,2) {$100$};
			\node[truth] (FTF) at (0,2) {$010$};
			\node[truth] (FFT) at (3,2) {$001$};
			
			\node[lie] (FFF) at (0,3) {$000$};
			
			\foreach \from/\to in {}
			\draw [thick, dotted] (\from) -- (\to);  
			
			\draw[double] ($(TTT.north west) + (-0.1,-0.1)$) -- ($(TTF.north west)+ (-0.2,-0.1)$) -- ($(TFF.south west)+ (-0.2,0.1)$) -- ($(FFF.south west)+ (-0.1,0.1)$) -- ($(FFF.south east)+ (0.1,0.1)$) -- ($(FFT.south east)+ (0.2,0.1)$) -- ($(FTT.north east)+ (0.2,-0.1)$) -- ($(TTT.north east)+ (0.1,-0.1)$) --  ($(TTT.north west)+ (-0.1,-0.1)$);
\end{tikzpicture}}}$&

$=$\rlap{$\;K_{3,3}=\oplus_3$}

\end{tabular}
		}
		\caption{Three-dimensional example for the proof of Lemma~\ref{djhasdhsfdhjsdewuixvcnyd}. Each step corresponds to one application of Lemma~\ref{keylemma}, where the respective $v$ is marked by a double-lined border and its relevant extensions through dotted lines. The hatched areas mark the increasing restriction on possible $v$'s to select next. In the end, a truth-table is achieved that is homogeneous. Since we started with a complete truth-table, the resulting final truth-table must be complete as well.}
		\label{fig:diaertertergjghgram}
	\end{center}
\end{figure}
\begin{Lemma}For any $n\in\omega$, all $l\leq n$, every $w\in 2^{n+1}$ with $l(w)=l$, and any Boolean function $F$ such that  $n(F)=n$ and $l(F)=l$, we have \[\oplus_{l}\equiv_{sB} K_{n,l} \equiv_{sB} H_{n,w} \leq_{sB} F.\]
\end{Lemma}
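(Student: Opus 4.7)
The plan is to establish the four relations of the chain in sequence, combining dimension-changing pre-processors with a chain-based embedding. First I would prove $\oplus_l \equiv_{sB} K_{n,l}$. For $\oplus_l \leq_{sB} K_{n,l}$ the pre-processing pads with $n-l$ copies of $1^\omega$: $(x_1,\dots,x_l)\mapsto(1^\omega,\dots,1^\omega,x_1,\dots,x_l)$. Since $s_\alpha(1^\omega)=1$ and the truth string of $K_{n,l}$ satisfies $w_{K_{n,l}}(n-l+j)=j\bmod 2$, this routes the count into the alternating suffix and yields parity $k\bmod 2=\oplus_l(x)$. Conversely, $K_{n,l}\leq_{sB}\oplus_l$ is handled by the sorting technique already used in Lemmas~\ref{lem:wtow} and~\ref{lem:wtow1}: send $(y_1,\dots,y_n)$ to its $l$ smallest entries $(\mu_1,\dots,\mu_l)$; the number of these exceeding $\alpha$ is $\max(t-(n-l),0)$, whose parity agrees with $w_{K_{n,l}}(t)$.

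Next I would prove $K_{n,l}\equiv_{sB} H_{n,w}$ for every $w$ with $l(w)=l$ by repeated application of Lemmas~\ref{lem:wtow} and~\ref{lem:wtow1}. Each of these lemmas shifts a single plateau boundary of the truth string by one position, and by iterating them the boundary positions of the truth string of $K_{n,l}$ can be morphed into those of $w$, and conversely.

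For the last link $H_{n,w}\leq_{sB} F$, the preceding reductions mean it suffices to establish $\oplus_l \leq_{sB} F$. Given $F$ with $l(F)=l$, fix a witnessing chain $v_0\subset v_1\subset\cdots\subset v_l$, so $F(v_k)=F(v_0)\oplus(k\bmod 2)$. The pre-processing sorts $(x_1,\dots,x_l)$ to $\mu_1\leq\cdots\leq\mu_l$ and outputs $(y_1,\dots,y_n)$ given by $y_j=1^\omega$ for $j\in v_0$, $y_j=0^\omega$ for $j\notin v_l$, and $y_j=\mu_{l-k+1}$ for each $j\in v_k\setminus v_{k-1}$ (where $k=1,\dots,l$). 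When the input has $k$ coordinates $\geq_{lex}\alpha$, exactly the coordinates in $v_k$ of the output are $\geq_{lex}\alpha$, so $(s_\alpha(y_1),\dots,s_\alpha(y_n))=v_k$ and $F$ evaluates to $F(v_0)\oplus\oplus_l(x)$.

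The main obstacle will be maintaining strong Weihrauch reducibility throughout, as identity post-processing is forced by Fact~\ref{fact:boundary}; this pins down the leading bits of the truth tables, so alignment with $\oplus_l(0^l)=0$ and $w_{K_{n,l}}(0)=0$ has to be preserved at every step. For the chain embedding this demands a careful choice of witnessing chain so that $F(v_0)=0$, which I would arrange by extending the chain downward as far as possible while preserving its $l$ transitions; correspondingly, the boundary-shifting steps preserve the leading bit of the truth string. Coordinating these parity conventions across all four reductions is where I expect the bulk of the technical care to be needed.
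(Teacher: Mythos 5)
Your proof is essentially the paper's. The four links are established with the same pre-processings: pad with $n-l$ dummy coordinates set to $1^\omega$ for $\oplus_l\leq_{sB}K_{n,l}$; keep the $l$ smallest coordinates for the converse; iterate Lemmas~\ref{lem:wtow} and~\ref{lem:wtow1} to move between $K_{n,l}$ and an arbitrary $H_{n,w}$ with $l(w)=l$; and for the last link, route the sorted input along a maximal alternating chain in $F$. The paper phrases that last link as $H_{n,w'}\leq_{sB}F$ for a $w'$ read off a full $\subset^1$-path from $0^n$ to $1^n$, whereas you reduce directly from $\oplus_l$ using a (possibly shorter) alternating chain $v_0\subset\cdots\subset v_l$ and pad the coordinates in $v_0$ with $1^\omega$ and those outside $v_l$ with $0^\omega$; these are the same idea and both are correct.

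The concern in your final paragraph is, however, misplaced. Fact~\ref{fact:boundary} is a statement specifically about the one-dimensional problems $s_\alpha\leq_{sW}s_\beta$; its proof hinges on $\Phi(\alpha)=\beta$, which has no analogue for the $\{0,1\}$-valued problems $s_\alpha^F$. For strong Weihrauch reductions between such problems the post-processing $b\mapsto 1-b$ is perfectly admissible, and the paper itself uses it in the proof of Theorem~\ref{thm:differentalpha} (the case $G=1-F\v{i}{b}$). So no alignment of $F(v_0)$ with $\oplus_l(0^l)=0$ is required: if $F(v_0)=1$, simply use the flipping post-processing. Note also that your proposed remedy would not work anyway: since the chain already realizes $l(F)=l$ alternations, any $v_{-1}\subset v_0$ must have $F(v_{-1})=F(v_0)$, so extending the chain downward can never change the leading value. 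Fortunately the remedy is unnecessary, and the reductions you have written down already establish the lemma.
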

\begin{proof}
To show $s^{\oplus_{l}}_\alpha\leq_W s^{K_{n,l}}_\alpha$ we just add some useless coordinates in the pre-processing that will always give true, that is, add $n-l$ entries $1$'s to the input tuple.

To show $s^{K_{n,l}}_\alpha\leq_W s^{\oplus_{l}}_\alpha$, just cut away some coordinates; that is, keep the $l$~many smallest values of the input tuple. 

To show $s^{K_{n,l}}_\alpha \leq_W s^{H_{n,w}}_\alpha$ we may have to ``spread out'' the output alternations that occur when changing levels; namely, the alternations in $s^{K_{n,l}}_\alpha$ are all concentrated at the top, while the alternations in $s^{H_{n,w}}_\alpha$ are determined by~$w$.
We apply Lemma \ref{lem:wtow} repeatedly to obtain $s^{K_{n,l}}_\alpha \leq_W s^{H_{n,w}}_\alpha$. To show $s^{H_{n,w}}_\alpha\leq_W s^{K_{n,l}}_\alpha$ we apply Lemma \ref{lem:wtow1} repeatedly.

Note that the results so far imply in particular that $H_{n,w} \equiv_B H_{n,w'}$ for all $w'$ with $l(w)=l(w')$. Thus for the last part of the proof we can w.l.o.g.\ replace $w$ by any such $w'$ that fits our needs.

So fix $F$. Since $l(F)=l$, fix a path $v_0\subset^1 v_1\subset^1 v_2\subset^1\cdots \subset^1 v_n$ with $v_i \in L_i$ for all~$i$ such that $w'= F(v_0)F(v_1)\cdots F(v_n)$ has $l(w')=l(F)=l$. Then $s^{H_{n,w'}}_\alpha\leq_W s_\alpha^F$ via a pre-processing that sends every $u$ to $v_{t(u)}$; such a pre-processing operates as follows: Follow the path from $v_0$ to $v_n$ and observe an additional~$1$ showing up with every step. The pre-processing will reorder its input tuple in such a way that the maximal input component will be put in the coordinate where the first $1$ shows up when going from $v_0$ to $v_1$, the second largest input component into the coordinate where the second $1$ shows up when going from $v_1$ to $v_2$, etc.
\end{proof}
Finally, to complete the proof of Theorem \ref{maintheorem}, we show the following.
\begin{Lemma}\label{djhasdhsfdhjsdewuixvcnyd}
	Fix $F$ with $l(F)=l$. Then $F\leq_{sB}  K_{n,l}$.
\end{Lemma}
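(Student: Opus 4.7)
The strategy is to transform $F$ into a homogeneous truth-table $H_{n,w^{*}}$ with $l(w^{*})=l$ by iterated applications of Lemma~\ref{keylemma}; combining with the preceding lemma, which gives $H_{n,w^{*}}\equiv_{sB} K_{n,l}$ whenever $l(w^{*})=l$, then yields $F\leq_{sB} K_{n,l}$.

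I would first fix a target $w^{*}$ and then schedule the individual flips so that Lemma~\ref{keylemma} applies at each step. For the target I take $w^{*}=F(v_{0})F(v_{1})\cdots F(v_{n})$, where $v_{0}\subset^{1}v_{1}\subset^{1}\dots\subset^{1}v_{n}$ is a full chain witnessing $l(F)=l$; such a chain exists by the argument already used in the preceding lemma, so $l(w^{*})=l$ is built in from the outset. The construction then needs, for every $u\in 2^{n}$ with $F(u)\neq w^{*}(t(u))$, a single application of Lemma~\ref{keylemma} that flips the value at~$u$.

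To order the flips I would proceed top-down by the level of the pivot. This automatically guarantees the first hypothesis of Lemma~\ref{keylemma}: by the time we consider a pivot $u\in L_{k}$, the cone above $u$ has already been made level-homogeneous in the current state of the table. For the second hypothesis, the successors of $u$ in $L_{k+1}$ will by then have been uniformly set to $w^{*}(k+1)$, so we need $F(u)=w^{*}(k+1)$; combined with $F(u)\neq w^{*}(k)$ this amounts to requiring that whenever some $u\in L_{k}$ deviates from $w^{*}(k)$, $w^{*}$ has a transition $w^{*}(k)\neq w^{*}(k+1)$ at level~$k$. This compatibility between $F$ and the chosen $w^{*}$ is the crux of the argument: I expect to establish it either by re-choosing the witness chain $v_{0},\dots,v_{n}$ adaptively to align with the structure of $F$ level by level, or by preceding the top-down sweep with anticipatory flips at lower levels whose validity in the original~$F$ can be checked directly via Lemma~\ref{keylemma}. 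The main obstacle is precisely this scheduling: small examples with $l(F)<n$ show that a naive top-down procedure applied to an arbitrarily chosen~$w^{*}$ can create spurious transitions, so the bulk of the work lies in exhibiting an ordering of flips that both satisfies Lemma~\ref{keylemma} throughout and terminates at exactly $H_{n,w^{*}}$.
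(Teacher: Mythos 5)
You identify the right tool---iterated application of Lemma~\ref{keylemma} to reach a homogeneous truth-table and then invoking the preceding lemma---and your analysis of when the lemma applies to a level-$k$ pivot is accurate. But as you yourself acknowledge, the crux is left open: if you fix a target $w^{*}$ drawn from a chain, there is no guarantee that whenever some $u\in L_{k}$ has $F(u)\neq w^{*}(k)$ there is also a transition $w^{*}(k)\neq w^{*}(k+1)$, and neither of your suggested repairs (adaptively re-choosing the chain, anticipatory lower-level flips) is actually carried out. Without this, the second hypothesis of Lemma~\ref{keylemma} can fail at the first problematic pivot, and the top-down sweep stalls.

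The paper closes exactly this gap by \emph{not} fixing $w^{*}$ in advance and by choosing the pivot via $\subseteq$-minimality rather than by a level-by-level scan. Writing $b=F(1^{n})$, one repeatedly selects a $\subseteq$-minimal $v$ with $0^{n}\subset v\subset 1^{n}$ whose entire upper cone is constant equal to $b$, and flips that $v$: the constant-cone condition makes both hypotheses of Lemma~\ref{keylemma} automatic, and the minimality of $v$ together with $v\neq 0^{n}$ is what keeps $l(F)$ invariant. Once no such $v$ remains, level $n-1$ has been forced to the value $1-b$; one then descends, exchanging the roles of $b$ and $1-b$ and restricting the constant-cone requirement to the levels not yet frozen. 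Crucially, the pivot in a given phase is not confined to the level currently being homogenised---it may lie arbitrarily far down---and the process terminates at $K_{n,l}$ with its alternations packed at the top, not at your chain-derived $H_{n,w^{*}}$. This minimality-based choice of pivot is precisely the scheduling ingredient your proposal is missing.
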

\begin{proof}
Write $b$ for $F(1^n)$ and%
\def\bb{\overline{b}}
$\bb$ for $1-b$. We search for a $\subseteq$-minimal ${0^n\subset v\subset 1^n}$ such that for every $v'\supseteq v$ we have $F(v')=b$. If this $v$ exists, we apply Lemma \ref{keylemma} to obtain $s_\alpha^F\leq_W s_\alpha^{F'}$, where $F'(v)=\bb$ and $F'$~is the same as $F$ everywhere else. Note that $l(F)\leq l(F')$. However, by the minimality of $v$, and the fact that $v\neq 0^n$, we have $l(F)=l(F')$. Now repeat with $F'$ in place of $F$, each time flipping an output value from $b$ to $\bb$, until we are unable to find $v$. Note that $l(F)$ is preserved.

Now assume that we cannot find $v$ in $F$. Then any $v'$ with $t(v')=n-1$ must satisfy $F(v')=\bb$; this is because $F(1^n)=b$ by assumption and $1^n$ is the only extension of such a~$v'$; thus the existence of such a $v'$ with $F(v')=b$ would contradict the assumption that no $v$ as required exists.

Thus, assume that $F(v')=\bb$ for every $t(v')=n-1$; we proceed with the next layer and wish to make $F(v)=b$ for all $v$ with $t(v)=n-2$. We continue analogously, but this time at each sub-step we search for a $\subseteq$-minimal $v$ such that $0<t(v)\leq n-2$ and such that for every $v'\supseteq v$ with $t(v')<n$, we have $F(v')=\bb$. We can still apply Lemma \ref{keylemma} to $F$ and~$v$, and again by the minimality of $v$, and the fact that $v\neq 0^n$, we also have $l(F)=l(F')$. For the same reasons as before, once we cannot find $v$ anymore, we have achieved that $F(v)=b$ for every $t(v)=n-2$.

We continue this way, going down level by level, until we obtain the final form $K_{n,l}$.
\end{proof}
\goodbreak
\bibliographystyle{plain}
\bibliography{refs}
\end{document}